\newcommand{\N}{\ensuremath{\mathbb{N}}}
\newcommand{\R}{\ensuremath{\mathbb{R}}}
\newcommand{\C}{\ensuremath{\mathbb{C}}}
\newcommand{\mb}{\mathbf}
\newcommand{\mc}{\mathcal}
\DeclarePairedDelimiter\abs{\lvert}{\rvert}
\DeclarePairedDelimiter\brac[]
\DeclarePairedDelimiter\cbrace\{\}
\DeclarePairedDelimiter\ha()
\DeclarePairedDelimiter{\ip}\langle\rangle
\DeclarePairedDelimiter{\nrm}\lVert\rVert
\newcommand{\nrmb}[1]{\bigl\|#1\bigr\|}
\newcommand{\hab}[1]{\bigl(#1\bigr)}
\newcommand{\cbraceb}[1]{\bigl\{#1\bigr\}}
\newcommand{\ipb}[1]{\bigl\langle#1\bigr\rangle}
\newcommand{\bracb}[1]{\bigl[#1\bigr]}
\newcommand{\nrms}[1]{\Bigl\|#1\Bigr\|}
\newcommand{\has}[1]{\Bigl(#1\Bigr)}
\newcommand{\cbraces}[1]{\Bigl\{#1\Bigr\}}
\DeclareMathOperator{\sgn}{sgn}
\DeclareMathOperator{\ind}{\mathbf{1}}
\DeclareMathOperator{\UMD}{UMD}
\newcommand{\dd}{\hspace{2pt}\mathrm{d}}
\newcommand{\inc}{\mb{\phi}}   
\newtheorem{theorem}{Theorem}
\newtheorem{corollary}[theorem]{Corollary}
\newtheorem{lemma}[theorem]{Lemma}
\newtheorem{proposition}[theorem]{Proposition}
\theoremstyle{remark}
\newtheorem{remark}[theorem]{Remark}
\theoremstyle{definition}
\numberwithin{theorem}{section}
\numberwithin{equation}{section}
\title[The \texorpdfstring{$\ell^\MakeLowercase{s}$}{\MakeLowercase{ls}}-boundedness of a family of integral operators]{\texorpdfstring{The $\ell^\MakeLowercase{s}$}{\MakeLowercase{ls}}-boundedness of a family of integral operators on UMD Banach function spaces}
\author{Emiel Lorist}
\thanks{The author is supported by the VIDI subsidy 639.032.427 of the Netherlands Organisation for Scientific Research (NWO)}
\address{Delft Institute of Applied Mathematics \\ Delft University of Technology \\ P.O. Box 5031\\ 2600 GA Delft \\The Netherlands}
\email{e.lorist@tudelft.nl}
\begin{document}

\begin{abstract}
 We prove the $\ell^s$-boundedness of a family of integral operators with an operator-valued kernel on $\UMD$ Banach function spaces. This generalizes and simplifies the earlier work by Gallarati, Veraar and the author \cite{GLV16}, where the $\ell^s$-boundedness of this family of integral operators was shown on Lebesgue spaces. The proof is based on a characterization of $\ell^s$-boundedness as weighted boundedness by Rubio de Francia.
\end{abstract}

\keywords{$\ell^s$-boundedness,  Integral operator, Banach function space, Muckenhoupt weights, Hardy--Littlewood maximal operator, UMD}

\subjclass[2010]{Primary: 42B20; Secondary: 42B25 46E30}


\dedicatory{Dedicated to Ben de Pagter on the occasion of his 65th birthday.}

\maketitle

\section{Introduction}
Over the past decades there has been a lot of interest in the $L^p$-maximal regularity of PDEs. Maximal $L^p$-regularity  of the abstract Cauchy problem
\begin{align}\label{eq:MR}
\left\{
    \begin{aligned}
      u'(t) +Au(t)&= f(t), \qquad t \in (0,T]\\
    u(0) &= x,
    \end{aligned}
  \right.
\end{align}
where $A$ is a closed operator on a Banach space $X$, means that for all $f \in L^p((0,T];X)$ the solution $u$ has ``maximal regularity'', i.e. both $u'$ and $Au$ are in $L^p((0,T];X)$. Maximal $L^p$-regularity can for example be used to solve quasi-linear and fully nonlinear PDEs by linearization techniques combined with the contraction mapping principle, see e.g. \cite{AM05,CP01,Lu95,Pr02}.

In the breakthrough work of Weis \cite{We01,We01b}, an operator theoretic characterization of maximal $L^p$-regularity on $\UMD$ Banach spaces was found in terms of the $\mc{R}$-boundedness of the resolvents of $A$ on a sector.
$\mc{R}$-boundedness is a random boundedness condition on a family of operators which is a strengthening of uniform boundedness. We refer to \cite{CDSW00,HNVW17} for more information on $\mc{R}$-boundedness.

In \cite{GV17b, GV17} Gallarati and Veraar developed a new approach to maximal $L^p$-regularity for the case where the operator $A$ in \eqref{eq:MR} is time-dependent and $t \mapsto A(t)$ is merely assumed to be measurable. In this new approach $\mc{R}$-boundedness is once again one of the main tools. For their approach the $\mc{R}$-boundedness of the family of integral operators $\cbrace{I_k:k \in \mc{K}}$ on $L^p(\R;X)$ is required. Here $I_k$ is defined for $f \in L^p(\R;X)$ as
\begin{equation*}
  I_kf(t) := \int_{-\infty}^t k(t-s)T(t,r)f(r)\dd r ,\qquad t \in \R,
\end{equation*}
where $T(t,s)$ is the two-parameter evolution family associated to $A(t)$ and $\mc{K}$ contains all kernels $k \in L^1(\R)$ such that $\abs{k} * \abs{g} \leq Mg$ for all simple $g:\R \to \C$.

In the literature there are many $\mc{R}$-boundedness results for integral operators, see \cite[Chapter 8]{HNVW17} for an overview. However none of these are applicable to the operator family of $\cbrace{I_k:k \in \mc{K}}$. Therefore in \cite{GLV16} Gallarati, Veraar and the author show a sufficient condition for the $\mc{R}$-boundedness of $\cbrace{I_k:k \in \mc{K}}$ on $L^p(\R;X)$ in the special case where $X = L^q$. This is done through the notion of $\ell^s$-boundedness, which states that for all finite sequences $(I_{k_j})_{j=1}^n$ in  $\cbrace{I_k:k \in \mc{K}}$ and $(x_j)_{j=1}^n$ in $X$ we have
\begin{equation*}
  \nrms{\has{\sum_{j=1}^n  \abs{I_{k_j} x_j}^s}^{1/s}}_{X} \lesssim \nrms{\has{\sum_{j=1}^n  \abs{x_j}^s}^{1/s}}_{X}.
\end{equation*}
For $s=2$ this notion coincides with $\mc{R}$-boundedness as a consequence of the Kahane-Khintchine inequalities.

Our main contribution is the generalization of the main result in \cite{GLV16} to the setting of $\UMD$ Banach function spaces $X$. For the proof we will follow the general scheme of \cite{GLV16} with some simplifications. As in case $X=L^q$, for any $\UMD$ Banach function space the notions of $\ell^2$-boundedness and $\mc{R}$-boundedness coincide, so the following theorem in particular implies the $\mc{R}$-boundedness of $\cbrace{I_k:k \in \mc{K}}$.

 \begin{theorem}
\label{theorem:mainintro}
Let $X$ be a $\UMD$ Banach function space and $p \in (1,\infty)$. Let $T\colon\R\times \R \to \mc{L}(X)$ be such that the family of operators
$$\cbraceb{T(t,r):t,r\in \R}$$
is $\ell^s$-bounded for all $s \in (1, \infty)$.
Then $\cbrace{I_k:k \in \mc{K}}$ is $\ell^s$-bounded on $L^p(\R;X)$ for all $s \in (1, \infty)$.
\end{theorem}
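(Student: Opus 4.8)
The plan is to use the Rubio de Francia characterization of $\ell^s$-boundedness to replace the $\ell^s$-bound by a scalar weighted inequality, and then to establish that inequality from the two soft properties of the kernel class $\mc{K}$ together with the $\ell^s$-boundedness of $\cbraceb{T(t,r):t,r\in\R}$. Throughout, write $X=X(\Omega)$ with $\Omega$ the underlying measure space. Since $L^p(\R;X)$ is again a UMD Banach function space, now over $\R\times\Omega$, the characterization applies to it: for fixed $s\in(1,\infty)$, once we know that there is a constant $C$ such that
\begin{equation*}
  \int_{\R}\int_{\Omega}\abs{I_kf(t,\omega)}^s\, w(t,\omega)\dd\omega\dd t\leq C\int_{\R}\int_{\Omega}\abs{f(t,\omega)}^s\, w(t,\omega)\dd\omega\dd t
\end{equation*}
for every $k\in\mc{K}$ and every weight $w$ in the class of admissible weights produced by the associated Rubio de Francia iteration, the characterization automatically upgrades this to the $\ell^s$-boundedness of the whole family $\cbrace{I_k:k\in\mc{K}}$, handling arbitrary finite subfamilies for free.

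The essential preparatory step is to choose that iteration so that the admissible weight $w$ on $\R\times\Omega$ is good in two ways simultaneously:
\begin{enumerate}[(a)]
  \item for a.e.\ fixed $t$, the slice $w(t,\cdot)$ is admissible for the $\ell^s$-bounded family $\cbraceb{T(t,r):t,r\in\R}$ on $X$, i.e.\ $\int_\Omega\absb{(T(t,r)x)(\omega)}^s w(t,\omega)\dd\omega\leq C\int_\Omega\abs{x(\omega)}^s w(t,\omega)\dd\omega$ uniformly in $t,r\in\R$ and $x\in X$;
  \item for a.e.\ fixed $\omega$, the slice $w(\cdot,\omega)$ is a Muckenhoupt $A_1$-weight on $\R$ with characteristic $\leq C$, so that $M\hab{w(\cdot,\omega)}\leq C\, w(\cdot,\omega)$ for the Hardy--Littlewood maximal operator $M$.
\end{enumerate}
Both are secured at once by running the iteration against the sum of the two maximal-type operators responsible for (a) and (b): the first acts in the $\omega$-variable, slicewise in $t$, and is the iteration operator attached to the $\ell^s$-boundedness of $\cbrace{T(t,r)}$ on $X$; the second acts in the $t$-variable, slicewise in the lattice of $X$, and is the vector-valued Hardy--Littlewood maximal operator. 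Convergence of the iteration reduces to the boundedness of this combined operator on the dual space occurring in the characterization for $L^p(\R;X)$; for the $t$-part this is Bourgain's maximal theorem for the UMD Banach function space built out of $X$, and for the $\omega$-part it follows from the $\ell^s$- and $\ell^{s'}$-boundedness of $\cbrace{T(t,r)}$ on $X$, the latter being available — directly and, for the dual exponent, by duality — precisely because $\cbrace{T(t,r)}$ is assumed $\ell^s$-bounded for all $s\in(1,\infty)$.

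Granted such a $w$, the weighted estimate itself is soft. The triangle inequality gives $\abs{I_kf(t,\omega)}\leq\int_{-\infty}^t\abs{k(t-r)}\,\absb{(T(t,r)f(r))(\omega)}\dd r$; testing the defining inequality of $\mc{K}$ against $g=\ind_{[-R,0]}$ and letting $R\to\infty$ yields $\nrm{k}_{L^1(0,\infty)}\leq1$ for every $k\in\mc{K}$, so Jensen's inequality upgrades this to $\abs{I_kf(t,\omega)}^s\leq\int_{-\infty}^t\abs{k(t-r)}\,\absb{(T(t,r)f(r))(\omega)}^s\dd r$. Integrating in $\omega$ against $w(t,\cdot)$, invoking Tonelli and the slicewise bound (a), then integrating in $t$ and reorganizing by Tonelli, one obtains
\begin{equation*}
  \int_{\R}\int_\Omega\abs{I_kf}^s\, w\leq C\int_{\R}\int_\Omega\abs{f(r,\omega)}^s\biggl(\int_r^\infty\abs{k(t-r)}\,w(t,\omega)\dd t\biggr)\dd\omega\dd r .
\end{equation*}
The inner integral is at most $(\check k*w(\cdot,\omega))(r)$ with $\check k(v):=\abs{k(-v)}$, and a reflection of the defining inequality of $\mc{K}$ shows $\check k\in\mc{K}$; hence it is dominated by $M\hab{w(\cdot,\omega)}(r)\leq C\, w(r,\omega)$ by (b), and the weighted estimate follows with constant $C^2$.

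The main obstacle, as this outline makes clear, is the preparatory step: producing a single weight on $\R\times\Omega$ that is at the same time admissible for the operator-valued coefficients $T(t,r)$ and an $A_1$-weight in the time variable, and checking that the combined maximal operator generating it is bounded. This is exactly where the UMD hypothesis is used — twice, via Bourgain's vector-valued maximal inequality and via the Rubio de Francia machinery on $X$ — and where the hypothesis that $\cbrace{T(t,r)}$ is $\ell^s$-bounded for all $s\in(1,\infty)$ is genuinely needed rather than $\ell^s$-boundedness for the single exponent in question. By contrast, once the weight is available the kernel class $\mc{K}$ enters only through the two elementary facts $\nrm{k}_{L^1(0,\infty)}\leq1$ and $\check k\in\mc{K}$, exactly as in \cite{GLV16}.
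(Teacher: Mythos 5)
Your outline shares the paper's core computation (Jensen's inequality using $\nrm{k}_{L^1}\leq 1$, a Rubio de Francia weight to absorb the operators $T(t,r)$ slicewise in time, and a maximal-function bound to handle the remaining time convolution), but there are two genuine gaps.

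First, the Rubio de Francia characterization (Proposition \ref{proposition:Rubiols}) requires the underlying spaces to be $s$-convex and order continuous; indeed the weights live in $(X^s)^*$, which is not even defined unless $X$ is $s$-convex. You invoke the characterization both for $\cbrace{I_k}$ on $L^p(\R;X)$ and (implicitly, to produce the admissible slices in (a)) for $\cbrace{T(t,r)}$ on $X$ at the exponent $s$. A general $\UMD$ Banach function space is only $q$-convex for some $q>1$ close to $1$ (Proposition \ref{proposition:rubioUMD}), and $L^p(\R;X)$ is $s$-convex only if moreover $s\leq p$. So your argument can at best yield $\ell^s$-boundedness for $s$ in a small interval $(1,\sigma_1]$; the conclusion ``for all $s\in(1,\infty)$'' is out of reach without the additional duality step (Lemma \ref{lemma:ls-boundedness}\ref{it:dualizels}, giving large $s$) and the interpolation step (Lemma \ref{lemma:ls-boundedness}\ref{it:interpolatels}, giving intermediate $s$) that the paper carries out in Steps 2 and 3 of the proof of Theorem \ref{theorem:main}. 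This is not a cosmetic omission: for $X=L^{3/2}$ and $s=2$, say, your scheme cannot be run directly.

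Second, the ``preparatory step'' --- a single weight $w$ on $\R\times\Omega$ satisfying (a) and (b) simultaneously with uniform constants --- is asserted rather than constructed. Proposition \ref{proposition:Rubiols} is a pure existence statement (for each $u$ there is \emph{some} $v$), not the iteration of an explicit sublinear operator on $(X^s)^*$; to ``sum the two maximal-type operators'' you must first realize the $T$-part as a bounded sublinear operator on $L^{(p/s)'}(\R;(X^s)^*)$ and produce a selection $u\mapsto v$ that is measurable in $t$ with uniformly controlled norm, and none of this is supplied. This is exactly the point where the paper's proof simplifies matters and departs from your route: in Proposition \ref{proposition:mainprel} the weight for the operator family is chosen only pointwise in the time variable $x$ (a norming functional $u_x$ via Hahn--Banach, then $v_x$ via Proposition \ref{proposition:Rubiols} applied to $\widetilde{\mc{T}}$ on $X$ alone), and no global weight with an $A_1$-in-time property is ever built; instead the leftover time convolution $\sum_j\abs{k_j}*\abs{f_j}^s$ is controlled by the $\ell^1$-boundedness of the scalar-kernel convolution family on $L^{p/s}(w;X^s)$ (Proposition \ref{proposition:kernel1sbdd}), which is where the lattice Hardy--Littlewood maximal operator and the $\UMD$ property of $X^s$ enter. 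Your elementary observations about $\mc{K}$ ($\nrm{k}_{L^1(0,\infty)}\leq 1$ and $\check{k}\in\mc{K}$) are correct, but to turn the proposal into a proof you would need either to carry out the weight construction in detail for abstract $\ell^s$-bounded families (essentially redoing \cite{GLV16} in the lattice setting) or to switch to the paper's pointwise-in-time argument, and in either case to add the duality and interpolation steps.
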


We will prove Theorem \ref{theorem:mainintro} in a more general setting in Section \ref{section:integral}. In particular we allow weights in time, which in applications for example allow  rather rough initial values (see e.g. \cite{KPW10, Li17b, MS12, PS04}).

For certain $\UMD$ Banach function spaces the $\ell^s$-bounded\-ness assumption in Theorem \ref{theorem:mainintro} can be checked by weighted extrapolation techniques, see Corollary \ref{corollary:main} and Remark \ref{remark:maincorollary}.

\subsection*{Notation}
For a measure space $(S,\mu)$ we denote the space of all measurable functions by $L^0(S)$. We denote the Lebesgue measure of a Borel set $E \in \mc{B}(\R^d)$ by $\abs{E}$. For Banach spaces $X$ and $Y$ we denote the vector space of bounded linear operators from $X$ to $Y$ by $\mc{L}(X,Y)$ and we set $\mc{L}(X):=\mc{L}(X,X)$. For a operator family $\Gamma\subset \mc{L}(X,Y)$ we set $\Gamma^*:= \cbrace{T^*:T\in \Gamma}$. For $p \in [1,\infty]$ we let $p'\in [1,\infty]$ be such that $\frac{1}{p}+\frac{1}{p'}=1$.

Throughout the paper we write $C_{a,b,\cdots}$ and $\phi_{a,b,\cdots}$ to denote a constant and a  nondecreasing function on $[1,\infty)$ respectively, which only depend on the parameters $a,b,\cdots$ and the dimension $d$ and which may change from line to line.

\subsection*{Acknowledgement} The author would like to thank Mark Veraar for carefully reading the draft version of this paper.

\section{Preliminaries}\label{section:preliminaries}
\subsection{Banach function spaces}
Let $(S,\mu)$ be a $\sigma$-finite measure space. An order ideal $X$ of $L^0(S)$ equipped with a norm $\nrm{\cdot}_X$ is called a {\emph{Banach function space}} if it has the following properties:
\begin{enumerate}[(i)]
  \item \textit{Compatibility:} If $\xi,\eta \in L^0(S)$ with $\abs{\xi} \leq \abs{\eta}$, then $\nrm{\xi}_X \leq \nrm{\eta}_X$
  \item \textit{Weak order unit:} There is a $\xi \in X$ with $\xi>0$.
  \item \textit{Fatou property:} If $0 \leq \xi_n \uparrow \xi$ for $(\xi_n)_{n=1}^\infty$ in $X$, $\xi \in L^0(S)$ and $\sup_{n \in \N} \nrm{\xi_n}_X<\infty$, then $\xi \in X$ and $\nrm{\xi}_X = \sup_{n\in \N} \nrm{\xi_n}_X$.
\end{enumerate}
A Banach function space is called \emph{order continuous} if for any sequence $0\leq \xi_n \uparrow \xi \in X$ we have $\nrm{\xi_n-\xi}_X \rightarrow 0$. Every reflexive Banach function space is order continuous. Order continuity ensures that the dual of $X$ is also a a Banach function space. For a thorough introduction to Banach function spaces we refer to \cite[section 1.b]{LT79} or \cite[Chapter 1]{BS88}.

A Banach function space $X$ is said to be \emph{$p$-convex} for $p \in [1,\infty]$ if
\begin{equation*}
  \nrms{\has{\sum_{j=1}^n \abs{\xi_k}^p}^{1/p}}_X \leq \has{\sum_{j=1}^n \nrm{\xi_j}_X^p}^{1/p}
\end{equation*}
for all $\xi_1,\cdots,\xi_n \in X$ with the sums replaced by suprema if $p = \infty$.
The defining inequality for $p$-convexity often includes a constant, but $X$ can always be renormed such that this constant  equals $1$. If a Banach function space is $p$-convex for some $p\in [1,\infty]$, then $X$ is also $q$-convex for all $q \in [1,q]$.

For a $p$-convex Banach function space $X$ we can define another Banach function space by
\begin{equation*}
  X^p := \cbraceb{\abs{\xi}^p \sgn \xi: \xi \in X} = \cbraceb{\xi \in L^0(S):\abs{\xi}^{1/p} \in X}
\end{equation*}
equipped with the norm $\nrm{\xi}_{X^p} := \nrmb{\abs{\xi}^{1/p}}_X^p$. We refer the interested reader to \cite[section 1.d]{LT79} for an introduction to $p$-convexity.

\subsection{\texorpdfstring{$\ell^s$}{ls}-boundedness}
Let $X$ and $Y$ be Banach functions spaces and let $\Gamma \subseteq \mc{L}(X,Y)$ be a family of operators.
We say that $\Gamma$ is \emph{$\ell^s$-bounded} if for all finite sequences $(T_j)_{j=1}^n$ in  $\Gamma$ and $(x_j)_{j=1}^n$ in $X$ we have
\begin{equation*}
  \nrms{\has{\sum_{j=1}^n  \abs{T_j x_j}^s}^{1/s}}_{Y} \leq C \, \nrms{\has{\sum_{j=1}^n  \abs{x_j}^s}^{1/s}}_{X}.
\end{equation*}
with the sums replaced by suprema if $s =\infty$. The least admissible constant $C$ will be denoted by $\brac{\Gamma}_{\ell^s}$.

Implicitly $\ell^s$-boundedness is a classical tool in harmonic analysis for operators on $L^p$-spaces (see e.g. \cite[Chapter V]{GR85} and \cite{Gr08,Gr09}). For Banach function spaces the notion was introduced in \cite{We01} under the name $\mc{R}_s$-boundedness, underlining its connection to the more well-known notion of $\mc{R}$-boundedness. An extensive study of
$\ell^s$-boundedness can be found in \cite{KU14} and for a comparison between $\ell^2$-boundedness and $\mc{R}$-boundedness we refer to \cite{KVW16}.

\begin{lemma}\label{lemma:ls-boundedness}
  Let $X$ and $Y$ be Banach function spaces and let $\Gamma \subseteq \mc{L}(X,Y)$.
  \begin{enumerate}[(i)]
 \item\label{it:interpolatels} Let $1 \leq s_0<s_1 \leq \infty$ and assume that $X$ and $Y$ are order continuous. If $\Gamma$ is $\ell^{s_0}$- and $\ell^{s_1}$-bounded, then $\Gamma$ is $\ell^s$-bounded for all $s \in [s_0,s_1]$ with $\brac{\Gamma}_{\ell^s} \leq \max\cbraceb{\brac{\Gamma}_{\ell^{s_0}},\brac{\Gamma}_{\ell^{s_1}}}$
  \item \label{it:dualizels} Let $s \in [1,\infty]$ and assume that $\Gamma$ is $\ell^s$-bounded. Then the adjoint family $\Gamma^*$ is $\ell^{s'}$-bounded with
      $\brac{\Gamma^*}_{\ell^{s'}}=\brac{\Gamma}_{\ell^s}$
  \end{enumerate}
\end{lemma}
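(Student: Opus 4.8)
For part \eqref{it:interpolatels}, the plan is to realize the $\ell^s$-norm as a complex interpolation space between the $\ell^{s_0}$- and $\ell^{s_1}$-norms and run a bilinear interpolation argument. Concretely, fix finite sequences $(T_j)_{j=1}^n$ in $\Gamma$ and $(x_j)_{j=1}^n$ in $X$. I would write $\theta \in [0,1]$ so that $\frac1s = \frac{1-\theta}{s_0} + \frac{\theta}{s_1}$ and express the quantity $\nrm{(\sum_j |T_j x_j|^s)^{1/s}}_Y$ by duality against $Y'$ (order continuity of $Y$ guarantees $Y^* = Y'$ is a norming Banach function space, so the dual pairing is available, and order continuity of $X$ lets me reduce to finitely supported, hence ``nice'', vectors). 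The cleanest route is Stein interpolation on the analytic family obtained by inserting complex powers: for $z$ in the strip, set $g_j(z) = |x_j|^{s(1-z)/s_0 + sz/s_1} \sgn x_j$ rescaled appropriately so that $g_j(it)$ sits at the $\ell^{s_0}$ level and $g_j(1+it)$ at the $\ell^{s_1}$ level, pair against a correspondingly chosen analytic family in $Y'$, and apply the three-lines lemma to the resulting scalar analytic function. The $\ell^{s_0}$- and $\ell^{s_1}$-boundedness hypotheses bound the function on the two edges of the strip by $\brac{\Gamma}_{\ell^{s_0}}$ and $\brac{\Gamma}_{\ell^{s_1}}$ respectively (each times $\nrm{(\sum_j|x_j|^s)^{1/s}}_X$ after renormalization), so the three-lines lemma yields the bound $\max\{\brac{\Gamma}_{\ell^{s_0}}, \brac{\Gamma}_{\ell^{s_1}}\}$ at the interior point $\theta$. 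Alternatively, one can cite the vector-valued Calder\'on product / complex interpolation identity $[X(\ell^{s_0}), X(\ell^{s_1})]_\theta = X(\ell^s)$ for order continuous $X$ and deduce the statement from the interpolation property of the $\ell^s$-boundedness constant directly; I would present whichever is shorter given what the paper already has available.

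For part \eqref{it:dualizels}, the plan is a direct duality computation. Fix finite sequences $(T_j)_{j=1}^n$ in $\Gamma$ and $(y_j^*)_{j=1}^n$ in $Y^*$. Using that $Y$ is a Banach function space, pick $(y_j)_{j=1}^n$ in $Y$, normalized so that $\nrm{(\sum_j |y_j|^{s})^{1/s}}_{Y} \le 1$, which nearly attains $\nrm{(\sum_j |y_j^*|^{s'})^{1/s'}}_{Y^*}$ via the pointwise H\"older/duality pairing $\sum_j |y_j^*|\,|y_j| \le (\sum_j|y_j^*|^{s'})^{1/s'}(\sum_j|y_j|^s)^{1/s}$ integrated against the norming functional. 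Then
\begin{align*}
  \nrms{\has{\sum_{j=1}^n |T_j^* y_j^*|^{s'}}^{1/s'}}_{X^*}
  &\approx \sum_{j=1}^n \abs{\ip{T_j^* y_j^*, x_j}}
  = \sum_{j=1}^n \abs{\ip{y_j^*, T_j x_j}} \\
  &\le \int_S \has{\sum_{j=1}^n |y_j^*|^{s'}}^{1/s'} \has{\sum_{j=1}^n |T_j x_j|^{s}}^{1/s} \dd\mu
  \le \nrms{\has{\sum_j |y_j^*|^{s'}}^{1/s'}}_{Y^*} \nrms{\has{\sum_j |T_j x_j|^{s}}^{1/s}}_{Y},
\end{align*}
where $(x_j)_{j=1}^n$ in $X$ is chosen to nearly attain the left-hand $X^*$-norm with $\nrm{(\sum_j|x_j|^s)^{1/s}}_X \le 1$; applying the $\ell^s$-boundedness of $\Gamma$ to the last factor gives $\brac{\Gamma^*}_{\ell^{s'}} \le \brac{\Gamma}_{\ell^s}$. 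The reverse inequality $\brac{\Gamma}_{\ell^s} \le \brac{\Gamma^*}_{\ell^{s'}}$ follows either by repeating the argument with $\Gamma^{**} \supseteq \Gamma$ under the canonical embedding (noting the $\ell^s$-boundedness constant only decreases under restriction to a subfamily) or, when $X,Y$ are reflexive, by symmetry; in general the embedding argument suffices and one gets equality. The endpoint cases $s \in \{1,\infty\}$ are handled the same way with the usual convention that $\ell^\infty$-sums are suprema.

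The main obstacle is the interpolation in part \eqref{it:interpolatels}: one must be careful that the $\ell^s$-\emph{boundedness constant} (not merely boundedness) interpolates, which forces the Stein/three-lines setup to be arranged so that the analytic family is normalized to have edge bounds exactly $\brac{\Gamma}_{\ell^{s_j}}$, and one must ensure the auxiliary analytic families of vectors and functionals genuinely lie in $X$, $Y^*$ with controlled norms on the two lines. Order continuity is exactly what is needed here: it gives $(X')' = X$-type reflexivity of the relevant lattice structure and lets one approximate by finitely supported vectors so that all the pointwise power manipulations $|\xi|^z$ make sense. Part \eqref{it:dualizels} is essentially routine once the pointwise H\"older inequality in Banach function spaces and the norming property of $Y^*$ are in hand.
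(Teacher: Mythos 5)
Your proposal is correct and takes essentially the same route as the paper: the paper disposes of \eqref{it:interpolatels} by citing Calder\'on's complex interpolation identity $[X(\ell^{s_0}_n),X(\ell^{s_1}_n)]_\theta = X(\ell^{s}_n)$ for order continuous $X$ (your three-lines/Stein setup is precisely the standard proof of that identity), and of \eqref{it:dualizels} by citing the isometric identification $X(\ell^s_n)^* = X^*(\ell^{s'}_n)$, which is exactly what your H\"older-plus-norming duality computation unpacks and which yields the equality of constants directly as equality of operator norms of adjoints.
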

\begin{proof}
Lemma \ref{lemma:ls-boundedness}\ref{it:interpolatels} follows from Calder\'on's theory of complex interpolation of vector-valued function spaces, see  \cite{Ca64} or \cite[Proposition 2.14]{KU14}. Lemma \ref{lemma:ls-boundedness}\ref{it:dualizels} is direct from the identification $X(\ell^s_n)^* = X^*(\ell^{s'}_n)$, see \cite[Section 1.d]{LT79} or \cite[Proposition 2.17]{KU14}
\end{proof}

 The following characterization of $\ell^s$-boundedness for $s \in [1,\infty)$ will be one of the key ingredients of our main result. This characterization relating $\ell^s$-bounded\-ness to a certain weighted bounded\-ness comes from the work of Rubio de Francia \cite{GR85, Ru84,Ru86}.
\begin{proposition}\label{proposition:Rubiols}
Let  $s \in [1,\infty)$ and let $X$ and $Y$ be $s$-convex order continuous Banach function spaces over  $(S_X,\mu_X)$ and $(S_Y,\mu_Y)$ respectively. Let $\Gamma\subseteq \mc{L}(X)$ and take $C>0$.
  Then the following are equivalent:
  \begin{enumerate}[(i)]
  \item $\Gamma$ is $\ell^s$-bounded  with $\brac{\Gamma}_{\ell^s} \leq C$.
  \item For all nonnegative $u \in (Y^s)^*$, there exists a nonnegative $v \in (X^s)^*$ with  $\nrm{v}_{(Y^s)^*} \leq \nrm{u}_{(X^s)^*}$ and
    \begin{equation*}
      \has{\int_{S_Y} \abs{T(\xi)}^s u \dd\mu_Y}^{1/s} \leq C \, \has{\int_{S_X} \abs{\xi}^s v \dd\mu_X}^{1/s}
    \end{equation*}
    for all $\xi \in X$ and  $T \in \Gamma$.
  \end{enumerate}
\end{proposition}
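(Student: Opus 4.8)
The plan is to prove the two implications separately: (ii)$\Rightarrow$(i) is elementary, whereas (i)$\Rightarrow$(ii) is the substantial direction and I would derive it from a minimax theorem. Throughout I would work with the $s$-convexifications $X^s$ and $Y^s$, which are again Banach function spaces by $s$-convexity, and first record the facts I need: $X^s$ and $Y^s$ are order continuous (if $0\leq\eta_n\uparrow\eta$ in $Z^s$ then $\eta^{1/s}-(\eta-\eta_n)^{1/s}\uparrow\eta^{1/s}$ in $Z$, so order continuity of $Z$ yields $\nrm{\eta-\eta_n}_{Z^s}=\nrm{(\eta-\eta_n)^{1/s}}_Z^s\to0$), so the duals $(X^s)^*$ and $(Y^s)^*$ coincide with the K\"othe duals and norm the respective spaces; moreover $\nrm{\abs{\xi}^s}_{Z^s}=\nrm{\xi}_Z^s$, and for $0\leq g\in Z^s$ we have $\nrm{g}_{Z^s}=\sup\bigl\{\int_S g\,w:0\leq w\in(Z^s)^*,\ \nrm{w}_{(Z^s)^*}\leq1\bigr\}$. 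I also write $T\xi\in Y$ for the action of $T\in\Gamma$ on $\xi\in X$.

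For (ii)$\Rightarrow$(i): given finite sequences $(T_j)_{j=1}^n$ in $\Gamma$ and $(x_j)_{j=1}^n$ in $X$, I would test $\sum_j\abs{T_jx_j}^s\in Y^s$ against an arbitrary $0\leq u\in(Y^s)^*$ with $\nrm{u}_{(Y^s)^*}\leq1$, apply (ii) to produce the associated $v$ with $\nrm{v}_{(X^s)^*}\leq1$, and estimate
\[
\int_{S_Y}\Bigl(\sum_{j=1}^n\abs{T_jx_j}^s\Bigr)u=\sum_{j=1}^n\int_{S_Y}\abs{T_jx_j}^su\leq C^s\sum_{j=1}^n\int_{S_X}\abs{x_j}^sv\leq C^s\nrms{\sum_{j=1}^n\abs{x_j}^s}_{X^s}.
\]
Taking the supremum over such $u$ and then $s$-th roots gives $\brac{\Gamma}_{\ell^s}\leq C$.

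For (i)$\Rightarrow$(ii) I would fix $0\leq u\in(Y^s)^*$, normalize to $\nrm{u}_{(Y^s)^*}=1$ (the case $u=0$ being trivial), and search for the weight $v$ inside the set $B:=\{v\in(X^s)^*:v\geq0,\ \nrm{v}_{(X^s)^*}\leq1\}$, which is convex and weak-star compact by the Banach--Alaoglu theorem. Writing $\mc{M}$ for the convex set of finitely supported probability measures on $X\times\Gamma$, I would consider, for $v\in B$ and $\mu=\sum_j\lambda_j\delta_{(\xi_j,T_j)}\in\mc{M}$, the functional
\[
\Phi(v,\mu):=\sum_j\lambda_j\int_{S_Y}\abs{T_j\xi_j}^su-C^s\sum_j\lambda_j\int_{S_X}\abs{\xi_j}^sv,
\]
which is affine in each argument and, for fixed $\mu$, weak-star continuous in $v$ (it has the form $v\mapsto a_\mu-C^s\int_{S_X}g_\mu\,v$ with $g_\mu:=\sum_j\lambda_j\abs{\xi_j}^s\in X^s$). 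Ky Fan's minimax theorem then gives $\inf_{v\in B}\sup_{\mu\in\mc{M}}\Phi(v,\mu)=\sup_{\mu\in\mc{M}}\inf_{v\in B}\Phi(v,\mu)$. For a fixed $\mu=\sum_j\lambda_j\delta_{(\xi_j,T_j)}$ the inner infimum equals $\sum_j\lambda_j\int_{S_Y}\abs{T_j\xi_j}^su-C^s\nrmb{\sum_j\lambda_j\abs{\xi_j}^s}_{X^s}$, and applying the $\ell^s$-boundedness of $\Gamma$ to the rescaled sequence $(\lambda_j^{1/s}\xi_j)_j$ with operators $(T_j)_j$ gives $\nrmb{\sum_j\lambda_j\abs{T_j\xi_j}^s}_{Y^s}\leq C^s\nrmb{\sum_j\lambda_j\abs{\xi_j}^s}_{X^s}$; pairing the left side with $u$ (of norm $1$) shows the inner infimum is $\leq0$. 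Hence $\inf_{v\in B}\sup_\mu\Phi(v,\mu)\leq0$, and since $v\mapsto\sup_\mu\Phi(v,\mu)$ is a supremum of weak-star continuous functions, hence weak-star lower semicontinuous, it attains its infimum on the weak-star compact set $B$ at some $v_*$ with $\sup_\mu\Phi(v_*,\mu)\leq0$. Evaluating at $\mu=\delta_{(\xi,T)}$ for every $(\xi,T)\in X\times\Gamma$ then yields $\int_{S_Y}\abs{T\xi}^su\leq C^s\int_{S_X}\abs{\xi}^sv_*$, so $v:=v_*$ (rescaled by $\nrm{u}_{(Y^s)^*}$ in the general case) is the desired weight.

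The step I expect to be the main obstacle is arranging the minimax argument so that the hypotheses of the minimax theorem are met: the minimizing variable has to range over a genuinely compact convex set, which is why one passes to the unit ball $B$ of $(X^s)^*$ with the weak-star topology, and the ``data'' over which one maximizes has to be convex, which forces one to replace individual pairs $(\xi,T)$ by finitely supported probability measures on $X\times\Gamma$. The remaining ingredients are the routine functional-analytic facts about $X^s$ and $Y^s$ collected at the start — that they are order continuous Banach function spaces whose norms are recovered by K\"othe duality — which is exactly where the $s$-convexity and order-continuity hypotheses are used.
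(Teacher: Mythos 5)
Your proof is correct; the paper does not prove this proposition itself but defers to the references (Rubio de Francia's lemma, \cite[Theorem VI.5.3]{GR85}, \cite[Lemma 3.4]{ALV17}, \cite[Proposition 6.1.3]{Lo16}), and the argument in those sources is exactly the duality/minimax scheme you carry out: convexify the data via finitely supported probability measures, minimize over the weak-star compact positive part of the unit ball of the K\"othe dual $(X^s)^*$, and apply a Ky Fan--type minimax theorem. You also silently correct the typo in the statement of (ii), reading $\nrm{v}_{(X^s)^*}\leq\nrm{u}_{(Y^s)^*}$, which is the intended inequality (as confirmed by how the proposition is used in the proof of Proposition \ref{proposition:mainprel}).
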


\begin{proof}
  The statement is a combination of \cite[Lemma 1, p. 217]{Ru86} and \cite[Theorem  VI.5.3]{GR85}, which for $X=Y$ is proven \cite[Lemma 3.4]{ALV17}. The statement for $X \neq Y$ is can be extracted from the proof of \cite[Lemma 3.4]{ALV17} and can in full detail be found in \cite[Proposition 6.1.3]{Lo16}
\end{proof}

\subsection{Muckenhoupt weights}
  A locally integrable function $w:\R^d\to (0,\infty)$ is called a \emph{weight}. For $p \in (1,\infty)$ and a weight $w$ we let $L^p(w)$ be the space of all $f \in L^0(\R^d)$ such that
\begin{equation*}
  \nrm{f}_{L^p(w)}:= \has{\int_{\R^d}\abs{f}^pw}^{1/p}<\infty.
  \end{equation*}
We will say that a weight $w$ lies in the \emph{Muckenhoupt class $A_p$} and write $w\in A_p$ if it satisfies
\[
[w]_{A_p}:=\sup_{Q}\frac{1}{\abs{Q}}\int_Q w \cdot \has{\frac{1}{\abs{Q}}\int_Q w^{1-p'}}^{p-1}<\infty,
\]
where the supremum is taken over all cubes  $Q\subseteq\R^d$ with sides parallel to the coordinate axes.

\begin{lemma}\label{lemma:weights}
  Let $p \in (1,\infty)$ and $w \in A_p$.
  \begin{enumerate}[(i)]
  \item \label{it:weights1} $w\in A_q$ for all $q \in (p,\infty)$ with $[w]_{A_q}\leq [w]_{A_p}$.\vspace{5pt}
  \item \label{it:weights2} $w^{1-p'} \in A_{p'}$ with $[w]_{A_p}^{1/p}= [w^{1-p'}]^{1/p'}_{A_p'}$.\vspace{5pt}
  \item \label{it:weights3} $w \in A_{p-\varepsilon}$ for $\varepsilon = \frac{1}{\inc_p([w]_{A_p})}$
  with $[w]_{A_{p-\varepsilon}} \leq \inc_{p}\ha{[w]_{A_p}}$.
  \end{enumerate}
\end{lemma}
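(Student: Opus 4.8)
The plan is to prove the three assertions separately: \ref{it:weights1} and \ref{it:weights2} are elementary manipulations of the $A_p$-characteristic, whereas \ref{it:weights3} encodes the self-improvement (``openness'') of the $A_p$-classes and rests on the reverse Hölder inequality. Throughout, for a cube $Q$ and $f\in L^1_\loc(\R^d)$ I abbreviate $\langle f\rangle_Q:=\frac{1}{\abs{Q}}\int_Q f$, so that $[w]_{A_p}=\sup_Q\langle w\rangle_Q\,\langle w^{1-p'}\rangle_Q^{\,p-1}$. For \ref{it:weights1}, fix a cube $Q$; since $q>p$ we have $\frac{1}{q-1}<\frac{1}{p-1}$, so the monotonicity of $L^r$-norms on the probability space $(Q,\abs{Q}^{-1}\dd x)$ — i.e.\ Jensen's inequality — applied to $w^{-1}$ gives
\[
  \langle w^{1-q'}\rangle_Q^{\,q-1}=\hab{\langle w^{-1/(q-1)}\rangle_Q}^{q-1}\leq\hab{\langle w^{-1/(p-1)}\rangle_Q}^{p-1}=\langle w^{1-p'}\rangle_Q^{\,p-1}.
\]
Multiplying by $\langle w\rangle_Q$ and taking the supremum over $Q$ yields $[w]_{A_q}\leq[w]_{A_p}<\infty$; in particular $w^{1-q'}\in L^1_\loc$, so the $A_q$ condition is meaningful.

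For \ref{it:weights2}, set $\sigma:=w^{1-p'}$ and use $(p-1)(p'-1)=1$ together with $(p')'=p$ to obtain $\sigma^{1-(p')'}=\sigma^{1-p}=w$. Writing $A:=\langle w\rangle_Q$ and $B:=\langle\sigma\rangle_Q$, one computes
\[
  \langle\sigma\rangle_Q\,\langle\sigma^{1-(p')'}\rangle_Q^{\,p'-1}=B\,A^{p'-1}=\hab{A\,B^{p-1}}^{p'-1},
\]
again by $(p-1)(p'-1)=1$. Taking the supremum over $Q$ (and using that $t\mapsto t^{p'-1}$ is increasing) gives $[\sigma]_{A_{p'}}=[w]_{A_p}^{p'-1}$; raising to the power $1/p'$ and noting $\frac{p'-1}{p'}=\frac1p$ yields the asserted identity $[w]_{A_p}^{1/p}=[w^{1-p'}]_{A_{p'}}^{1/p'}$.

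For \ref{it:weights3} I would combine \ref{it:weights2} with the reverse Hölder inequality. By \ref{it:weights2}, $\sigma:=w^{1-p'}\in A_{p'}$ with $[\sigma]_{A_{p'}}$ controlled by $[w]_{A_p}$, and the reverse Hölder inequality for $A_{p'}$-weights (see e.g.\ \cite{GR85}) then provides an exponent $r>1$ with $r-1\geq 1/\inc_p([w]_{A_p})$ and $\langle\sigma^r\rangle_Q^{1/r}\leq 2\,\langle\sigma\rangle_Q$ for every cube $Q$. Choose $\varepsilon\in(0,p-1)$ by $\frac{p-1}{p-1-\varepsilon}=r$, i.e.\ $\varepsilon=(p-1)\frac{r-1}{r}$; this $\varepsilon$ is again bounded below by $1/\inc_p([w]_{A_p})$ for a (possibly larger) $\inc_p$, and by \ref{it:weights1} we may decrease it to exactly this value. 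Since $w^{1-(p-\varepsilon)'}=w^{-1/(p-1-\varepsilon)}=\sigma^{r}$ and $r(p-1-\varepsilon)=p-1$, the reverse Hölder bound gives
\[
  \langle w\rangle_Q\,\langle w^{1-(p-\varepsilon)'}\rangle_Q^{\,p-1-\varepsilon}=\langle w\rangle_Q\,\langle\sigma^r\rangle_Q^{\,p-1-\varepsilon}\leq 2^{p-1}\langle w\rangle_Q\,\langle\sigma\rangle_Q^{\,p-1}\leq 2^{p-1}[w]_{A_p},
\]
so $w\in A_{p-\varepsilon}$ with $[w]_{A_{p-\varepsilon}}\leq 2^{p-1}[w]_{A_p}$; enlarging $\inc_p$ once more to dominate $t\mapsto 2^{p-1}t$ completes the argument.

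The one non-routine ingredient is the reverse Hölder inequality together with the quantitative lower bound on its exponent $r$ in terms of $[w]_{A_p}$; this is classical — proved by a Calder\'on--Zygmund stopping-time argument — so I would cite it rather than reprove it. Everything else is bookkeeping with the conjugate exponents of $p$ and $p-\varepsilon$, the only points needing a little care being the exactness of the constant in \ref{it:weights2} and the packaging of all dependencies in \ref{it:weights3} into a single nondecreasing function $\inc_p$.
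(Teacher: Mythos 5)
Your proposal is correct and takes essentially the same route as the paper, which gives no details itself: parts (i) and (ii) are exactly the direct manipulations of the definition that the paper alludes to, and your reverse-H\"older argument applied to the dual weight $w^{1-p'}$ is the standard proof behind the reference \cite[Exercise 9.2.4]{Gr09} that the paper cites for part (iii). The quantitative bookkeeping (the lower bound $r-1\gtrsim [w]_{A_p}^{-(p'-1)}$ and the packaging into a nondecreasing $\inc_p$) is handled correctly.
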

The first two properties of Lemma \ref{lemma:weights} follow directly from the definition. The third is for example proven in \cite[Exercise 9.2.4]{Gr09}. For a more thorough introduction to Muckenhoupt weights we refer to \cite[Chapter 9]{Gr09}.

\subsection{The UMD property}\label{section:UMD}
A Banach space $X$ is said to have the $\UMD$ property if the martingale difference sequence of any finite martingale in $L^p(\Omega;X)$ is unconditional for some (equivalently all) $p \in (1,\infty)$. We will work with $\UMD$ Banach function spaces, of which standard examples include reflexive Lebesgue, Lorentz and Orlicz spaces. In this Festschrift it is shown that reflexive Musielak-Orlicz spaces, so in particular reflexive variable Lebesgue spaces, have the $\UMD$ property, see \cite{LVY18}. The $\UMD$ property implies reflexivity, so in particular $L^1$ and $L^\infty$ do not have the $\UMD$ property. For a thorough introduction to the theory of $\UMD$ Banach spaces we refer to \cite{Bu01,HNVW16}.

For an order continuous Banach function space $X$ over $(S,\mu)$ there is also a characterization of the $\UMD$ property in terms of the \emph{lattice Hardy--Littlewood maximal operator}, which for simple functions $f\colon\R^d \to X$ is given by
\begin{equation*}
  \widetilde{M}f(x) := \sup_{Q \ni x} \frac{1}{\abs{Q}}\int_Q\abs{f(y)}\dd y, \qquad x\in \R^d
\end{equation*}
where the supremum is taken pointwise in $S$ and over all cubes $Q \subseteq \R^d$ with sides parallel to the coordinate axes (see \cite{GMT93} or \cite[Lemma 5.1]{HL18b} for a detailed definition of $\widetilde{M}$). It is a deep result by Bourgain \cite{Bo84} and Rubio de Francia \cite{Ru86} that $X$ has the $\UMD$ property if and only if $\widetilde{M}$ is bounded on $L^p(\R^d;X)$ and $L^p(\R^d;X^*)$ for some (equivalently all) $p \in (1,\infty)$. For weighted $L^p$-spaces we have the following proposition, which was proven in \cite{GMT93}. The increasing dependence on $[w]_{A_p}$ is shown in \cite[Corollary 5.3]{HL18b}.
\begin{proposition}\label{proposition:HL}
  Let $X$ be a $\UMD$ Banach function space, $p \in (1,\infty)$ and $w \in A_p$. Then for all $f \in L^p(w;X)$ we have
  \begin{equation*}
    \nrmb{\widetilde{M}f}_{L^p(w;X)} \leq \inc_{X,p}\hab{[w]_{A_p}}\nrm{f}_{L^p(w;X)}.
  \end{equation*}
\end{proposition}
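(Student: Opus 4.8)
The plan is to bootstrap the \emph{unweighted} boundedness of $\widetilde M$ --- which on $L^q(\R^d;X)$ and $L^q(\R^d;X^*)$, for every $q\in(1,\infty)$, is exactly the Bourgain--Rubio de Francia characterization of $\UMD$ recalled above --- to the weighted estimate, using the scalar $A_p$-theory to "carry'' the weight. It suffices to treat simple $f$, the general case following by density since $X$ is order continuous; and since only an increasing dependence on $[w]_{A_p}$ is claimed, we are free to be wasteful with factors depending on $[w]_{A_p}$.

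The plan has three steps. \emph{(1) Dyadic reduction.} Using the $3^d$ shifted dyadic systems $\mathcal D_1,\dots,\mathcal D_{3^d}$, one has, pointwise in both $\R^d$ and $S$, $\widetilde M f\le C_d\sum_{i=1}^{3^d}\widetilde M^{\mathcal D_i}f$ with $\widetilde M^{\mathcal D}f(x):=\sup_{\mathcal D\ni Q\ni x}\frac1{\abs Q}\int_Q\abs{f(y)}\dd y$ (again a pointwise supremum in $S$), so it is enough to bound a single $\widetilde M^{\mathcal D}$. \emph{(2) Sparse domination.} The key step is to show that for simple $f,g$ there is a sparse family $\mathcal S\subseteq\mathcal D$ such that
\begin{equation*}
  \int_{\R^d}\ipb{\widetilde M^{\mathcal D}f(x),g(x)}\dd x\;\le\;C_X\sum_{Q\in\mathcal S}\abs Q\,\has{\frac1{\abs Q}\int_Q\nrm{f}_X}\has{\frac1{\abs Q}\int_Q\nrm{g}_{X^*}}.
\end{equation*}
One runs a Calder\'on--Zygmund stopping-time construction on the \emph{scalar} envelopes $\nrm{f(\cdot)}_X$ and $\nrm{g(\cdot)}_{X^*}$; the contributions of the cubes between consecutive stopping cubes are localized and absorbed using the unweighted $L^q(\R^d;X)$- and $L^q(\R^d;X^*)$-boundedness of $\widetilde M$, while the stopping cubes themselves assemble into the sparse sum. \emph{(3) Weighted sparse bound and duality.} Finally, for nonnegative $\varphi\in L^p(w)$ and $\psi\in L^{p'}(\sigma)$ with $\sigma:=w^{1-p'}\in A_{p'}$ (Lemma \ref{lemma:weights}\ref{it:weights2}), the classical sharp estimate for sparse forms gives
\begin{equation*}
  \sum_{Q\in\mathcal S}\abs Q\,\has{\frac1{\abs Q}\int_Q\varphi}\has{\frac1{\abs Q}\int_Q\psi}\;\le\;C_p\,[w]_{A_p}^{\max\{1,\,1/(p-1)\}}\,\nrm{\varphi}_{L^p(w)}\nrm{\psi}_{L^{p'}(\sigma)}.
\end{equation*}
Taking $\varphi=\nrm{f}_X$, $\psi=\nrm{g}_{X^*}$, then the supremum over all $g$ with $\nrm{g}_{L^{p'}(\sigma;X^*)}\le1$, and using $L^p(w;X)^*=L^{p'}(\sigma;X^*)$ (valid since $\UMD$ implies $X$ reflexive, hence order continuous) yields $\nrmb{\widetilde M^{\mathcal D}f}_{L^p(w;X)}\le\inc_{X,p}\hab{[w]_{A_p}}\nrm{f}_{L^p(w;X)}$; summing over $i$ completes the proof.

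The genuine difficulty is step (2). In the scalar case the stopping cubes are simply the principal cubes of $\abs f$ and one even obtains the pointwise bound $M^{\mathcal D}f\le\sum_{Q\in\mathcal S}\langle\abs f\rangle_Q\mathbf 1_Q$; for the lattice operator this collapses, because the principal cubes of $s\mapsto f(\cdot)(s)$ depend on $s\in S$ and their union over $s$ need not be sparse. Equivalently, $\nrm{\widetilde M f(x)}_X$ is \emph{not} controlled by averages of $\nrm{f}_X$, which is precisely why the Bourgain--Rubio de Francia theorem is a nontrivial input here: only a \emph{bilinear} (form) domination survives, and it is exactly the unweighted $L^q(\R^d;X)$- and $L^q(\R^d;X^*)$-boundedness of $\widetilde M$ that closes the stopping-time estimate for the in-between pieces. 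This is carried out in \cite{GMT93} for the qualitative statement and in \cite[Corollary 5.3]{HL18b} with the stated dependence on $[w]_{A_p}$. Alternatively, one can replace step (2) by a vector-valued Fefferman--Stein inequality $\nrmb{\widetilde M f}_{L^p(v;X)}\lesssim_{X,p}\nrm{f}_{L^p(Mv;X)}$ (all weights $v$) --- whose proof again rests on the unweighted bound --- combined with Rubio de Francia's $A_1$-algorithm and duality to reduce an arbitrary $w\in A_p$ to the $A_1$ case.
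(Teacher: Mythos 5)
The paper gives no proof of this proposition at all --- it is quoted from \cite{GMT93}, with the increasing dependence on $[w]_{A_p}$ taken from \cite[Corollary 5.3]{HL18b} --- and your sketch is an accurate reconstruction of the sparse-domination argument of that latter reference: dyadic reduction, a \emph{bilinear} sparse form bound in terms of $\langle \nrm{f}_X\rangle_Q\langle \nrm{g}_{X^*}\rangle_Q$ (correctly identifying that no pointwise sparse bound by averages of $\nrm{f}_X$ can hold and that the unweighted Bourgain--Rubio de Francia bounds on $L^q(\R^d;X)$ and $L^q(\R^d;X^*)$ are the input that closes the stopping-time estimate), and the standard weighted estimate for sparse forms with constant $[w]_{A_p}^{\max\{1,1/(p-1)\}}$. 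So this is essentially the same approach as the paper's source, and nothing needs to be added.
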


  The $\UMD$ property of a Banach function space $X$ also implies that $X^q$ has the $\UMD$ property for a $q>1$, which is a deep result by Rubio de Francia \cite[Theorem 4]{Ru86}.
  \begin{proposition} \label{proposition:rubioUMD}Let $X$ be a $\UMD$ Banach function space. Then there is a $p>1$ such that $X$ is $p$-convex and $X^q$ is a $\UMD$ Banach function space for all $q \in [1,p]$.
  \end{proposition}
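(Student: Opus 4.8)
The plan is to establish the two assertions separately: the $p$-convexity of $X$ by soft Banach-lattice arguments, and the $\UMD$ property of $X^{q}$ by reducing to a weighted bound for the lattice Hardy--Littlewood maximal operator and invoking Rubio de Francia's theorem. For the convexity: since $X$ has $\UMD$ it is reflexive, hence order continuous, $X^*$ is again a Banach function space, $X^{**}=X$, and $X^*$ has $\UMD$ as well. A $\UMD$ space has finite cotype \cite{HNVW17}, and an order continuous Banach function space of finite cotype is $q_0$-concave for some $q_0<\infty$; see \cite[\S1.f]{LT79}. Applying this to $X^*$ and using the duality between $q_0$-concavity and $q_0'$-convexity \cite[\S1.d]{LT79}, we conclude that $X=X^{**}$ is $p_0$-convex with $p_0:=q_0'>1$, so that $X$ is $q$-convex for every $q\in[1,p_0]$.

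Now fix $q\in[1,p_0]$. By the Bourgain--Rubio de Francia characterization of $\UMD$ for order continuous Banach function spaces \cite{Bo84,Ru86} recalled above, $X^{q}$ has $\UMD$ as soon as $\widetilde M$ is bounded on both $L^{r}(\R^d;X^{q})$ and $L^{r}(\R^d;(X^{q})^*)$ for some $r\in(1,\infty)$. The map $f\mapsto\abs{f}^{1/q}\sgn f$ identifies $L^{r}(\R^d;X^{q})$ isometrically with the $q$-concavification $\bigl(L^{rq}(\R^d;X)\bigr)^{q}$, and since $\widetilde M$ acts only through absolute values it is carried to $g\mapsto\widetilde M(\abs{g}^{q})$; hence boundedness of $\widetilde M$ on $L^{r}(\R^d;X^{q})$ is equivalent to
\begin{equation*}
  \nrmb{\bigl(\widetilde M(\abs{g}^{q})\bigr)^{1/q}}_{L^{rq}(\R^d;X)} \le C\,\nrm{g}_{L^{rq}(\R^d;X)} .
\end{equation*}
The second bound I would treat in the same way, reducing via the concavification duality of \cite[\S1.d]{LT79} to a maximal estimate for $X^*$-valued functions in which the relevant power is at most $1$ (in the model case $X=L^{s}$ one has $(X^{q})^*=(X^*)^{(s-q)/(s-1)}$, a power $\le1$ of $X^*$ since $q\ge1$); because $t\mapsto t^{\theta}$ is concave for $\theta\le1$, Jensen gives $\bigl(\widetilde M(\abs{h}^{\theta})\bigr)^{1/\theta}\le\widetilde M\abs{h}$ pointwise, so this bound follows for free from the boundedness of $\widetilde M$ on $L^{r}(\R^d;X^*)$, which holds since $X^*$ has $\UMD$. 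Thus everything comes down to the displayed inequality.

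For $q=1$ the displayed inequality is merely the boundedness of $\widetilde M$ on $L^{r}(\R^d;X)$, and for $q<1$ it again follows from Jensen; the substance is the range $q>1$, where Jensen points the wrong way. Here the input is that, $X$ having $\UMD$, Proposition \ref{proposition:HL} provides $\nrm{\widetilde M}_{L^{\rho}(w;X)\to L^{\rho}(w;X)}\le\inc_{X,\rho}\hab{[w]_{A_{\rho}}}$ for \emph{all} $\rho\in(1,\infty)$ and $w\in A_{\rho}$. One then runs the Rubio de Francia extrapolation scheme to construct, for a well-chosen $r$ and for $q-1>0$ small enough, a weight against which the $q$-th power average in the display is absorbed; the decisive ingredient is the openness of the Muckenhoupt classes (Lemma \ref{lemma:weights}\ref{it:weights3}), which supplies exactly the amount $\varepsilon>0$ of room in the $A_{\rho}$-scale that allows a $q$-th power maximal average with $q<1+\varepsilon$ to be controlled. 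Taking $p:=\min\{p_0,\,1+\varepsilon\}$ then gives the proposition.

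The main obstacle is precisely this ``less convex'' direction $q>1$: converting weighted maximal bounds for $X$-valued functions into a maximal bound on $L^{r}(\R^d;X^{q})$ with a loss that stays finite as $q\downarrow1$. This is the deep content of \cite[Theorem 4]{Ru86}; a fully self-contained proof would have to reproduce Rubio de Francia's vector-valued reverse-H\"older and extrapolation arguments, so in practice I would cite \cite[Theorem 4]{Ru86} for this step and only carry out the reductions above.
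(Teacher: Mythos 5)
Your proposal ends up resting on exactly the same foundation as the paper: the paper offers no proof of Proposition \ref{proposition:rubioUMD} at all, simply citing \cite[Theorem 4]{Ru86} for the deep fact that $X^q$ inherits the $\UMD$ property for $q$ close to $1$, and your argument also defers precisely this step to that reference, so relative to the paper there is no gap. What you add is partly a genuine improvement and partly heuristic. The derivation of $p$-convexity from the finite cotype of the $\UMD$ space $X^*$ (finite cotype of a Banach lattice gives $q_0$-concavity for some $q_0<\infty$, and concavity of $X^*$ dualizes to $q_0'$-convexity of $X=X^{**}$) is correct and makes explicit something the paper buries in the citation. On the other hand, two of your reductions should not be presented as if they were complete. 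First, the claim that $(X^q)^*$ is a power $\theta\le 1$ of $X^*$ is special to $X=L^s$: for a general Banach function space $(X^q)^*$ is a Calder\'on product of $X^*$ with $L^1$, not a concavification of $X^*$, so the ``Jensen for free'' treatment of the dual maximal bound does not go through as stated. Second, the sketch of running Rubio de Francia extrapolation together with the openness of the $A_\rho$ classes to absorb the $q$-th power average is not an argument one can complete from Proposition \ref{proposition:HL} alone --- the loss in $q$ there is controlled by lattice-independent weight constants, whereas the admissible $\varepsilon$ in the proposition must depend on $X$; this self-improvement is exactly the content of \cite[Theorem 4]{Ru86} (proved there via the representation of a $\UMD$ lattice as a Calder\'on--Lozanovskii interpolation space between a $\UMD$ lattice and $L^2$), and cannot be rederived from the weighted maximal estimate in the way you indicate. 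Since you ultimately cite that theorem for the decisive step, your proof is acceptable in the same sense the paper's is, but the two intermediate reductions should be either fixed or dropped.
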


\section{Integral operators with an operator-valued kernel}\label{section:integral}
Before turning to our main result on the $\ell^s$-boundedness of a family of integral operators on $L^p(w;X)$ with operator-valued kernels, we will first study the $\ell^s$-boundedness of a family of convolution operators on $L^p(w;X)$  with scalar-valued kernels. For this define
\begin{equation*}
  \mc{K} :=   \cbrace{k\in L^1(\R^d): \abs{k} * \abs{f}\leq Mf \text{ a.e. for all simple} \, f :\R^d \to \C}.
\end{equation*}
  As an example any radially decreasing $k \in L^1(\R^d)$ with $\nrm{k}_{L^1(\R^d)}\leq 1$ is an element of $\mc{K}$. For more examples see \cite[Chapter 2]{Gr08} and \cite[Proposition 4.6]{NVW15}.

Let $X$ be a Banach function space. For a kernel $k \in \mc{K}$ and a simple function $f\colon\R^d \to X$ we define
\begin{equation*}
  T_kf := k*f = \int_{\R^d}k(x-y)f(y)\dd y.
\end{equation*}
As
$$\nrm{T_k f}_X \leq \abs{k}*\nrm{f}_X \leq M \hab{\nrm{f}_X},$$ and since the Hardy-Littlewood maximal operator $M$ is bounded on $L^p(w)$ for all $p \in (1,\infty)$ and $w \in A_p$, $T_k$ extends to a bounded linear operator on $L^p(w;X)$ by density. This argument also shows that the family of convolution operators given by $\Gamma:= \cbrace{T_k:k \in \mc{K}}$ is uniformly bounded on $L^p(w;X)$.

If $X$ is a $\UMD$ Banach function space we can say more. The following lemma was first developed by van Neerven, Veraar and Weis in \cite{NVW12, NVW15} in connection to stochastic maximal regularity. As in \cite{NVW12, NVW15}, the endpoint case $s=1$ will play a major role in the proof of our main theorem in the next section.
\begin{proposition}
\label{proposition:kernel1sbdd}
Let $X$ be a $\UMD$ Banach function space, $s \in [1,\infty]$, $p\in (1, \infty)$ and $w\in A_p$. Then $\Gamma =\cbrace{T_k:k \in \mc{K}}$  is $\ell^{s}$-bounded on $L^p(w;X)$ with
\begin{equation*}
  [\Gamma]_{\ell^s} \leq \inc_{X,p}\hab{[w]_{A_p}}.
\end{equation*}
\end{proposition}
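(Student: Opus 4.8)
The plan is to use the Rubio de Francia characterization of $\ell^s$-boundedness (Proposition \ref{proposition:Rubiols}) to reduce the $\ell^s$-boundedness of $\Gamma$ on $L^p(w;X)$ to a weighted estimate for a single convolution operator $T_k$, and then handle the weighted estimate using the pointwise domination $\nrm{T_kf}_X \leq M(\nrm{f}_X)$ together with the boundedness of the lattice Hardy--Littlewood maximal operator from Proposition \ref{proposition:HL}. One immediate obstacle is that Proposition \ref{proposition:Rubiols} requires $X$ (here $Y=X$) to be $s$-convex and order continuous, which need not hold for an arbitrary $\UMD$ Banach function space and certainly fails for small $s$. I would circumvent this by first proving the two endpoints $s = 1$ and $s = \infty$ by hand, and then obtaining all intermediate $s \in (1,\infty)$ by interpolation via Lemma \ref{lemma:ls-boundedness}\ref{it:interpolatels} (using that $\UMD$ spaces are reflexive, hence order continuous).

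\textbf{The endpoint $s=\infty$.} Here $\ell^\infty$-boundedness means that for finite sequences $(T_{k_j})$ and $(f_j)$ in $L^p(w;X)$,
\begin{equation*}
  \nrms{\sup_{j} \abs{T_{k_j} f_j}}_{L^p(w;X)} \leq C \, \nrms{\sup_{j} \abs{f_j}}_{L^p(w;X)}.
\end{equation*}
Pointwise in $\R^d$ and in $S$ we have $\abs{T_{k_j}f_j} \leq \absb{k_j} * \absb{f_j} \leq \absb{k_j} * \bigl(\sup_i \abs{f_i}\bigr) \leq M\bigl(\sup_i\abs{f_i}\bigr)$, where the last maximal operator is the lattice one $\widetilde M$ applied to the $X$-valued function $g := \sup_i \abs{f_i}$. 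Taking the supremum over $j$ and then the $L^p(w;X)$-norm, Proposition \ref{proposition:HL} gives the bound with $C = \inc_{X,p}([w]_{A_p})$. This argument is essentially the uniform-boundedness argument already sketched in the excerpt, now carried out with the vectorial maximal function.

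\textbf{The endpoint $s=1$.} By Proposition \ref{proposition:rubioUMD} there is $q>1$ so that $X^q$ is a $\UMD$ Banach function space; its dual $(X^q)^*$ is then also a $\UMD$ (hence order continuous, $1$-convex) Banach function space. I would apply Proposition \ref{proposition:Rubiols} with the roles of the spaces adjusted so that $\ell^1$-boundedness of $\Gamma$ on $L^p(w;X)$ corresponds, after passing to adjoints via Lemma \ref{lemma:ls-boundedness}\ref{it:dualizels}, to $\ell^\infty$-boundedness of the adjoint family $\Gamma^*$ on the dual space. The adjoint of the convolution operator $T_k$ on $L^p(w;X)$ is (up to reflection of the kernel, $\tilde k(x) := k(-x)$, which again lies in $\mc{K}$) the convolution operator $T_{\tilde k}$ on $L^{p'}(w^{1-p'};X^*)$. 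Since $X^*$ is again a $\UMD$ Banach function space and $w^{1-p'} \in A_{p'}$ with controlled characteristic by Lemma \ref{lemma:weights}\ref{it:weights2}, the already-established $s=\infty$ case applied to $\Gamma^* = \{T_{\tilde k} : k \in \mc{K}\}$ on $L^{p'}(w^{1-p'};X^*)$ yields $[\Gamma^*]_{\ell^\infty} \leq \inc_{X^*,p'}([w^{1-p'}]_{A_{p'}})$, and dualizing back gives the $\ell^1$ bound for $\Gamma$ on $L^p(w;X)$, absorbing everything into a single $\inc_{X,p}([w]_{A_p})$. I expect the main technical obstacle to be the bookkeeping in this duality step: correctly identifying the adjoint of $T_k$ on the weighted Bochner space, checking that $\tilde k \in \mc{K}$, and tracking that the constant stays a nondecreasing function of $[w]_{A_p}$ alone (using Lemma \ref{lemma:weights}). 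Once both endpoints are in place, Lemma \ref{lemma:ls-boundedness}\ref{it:interpolatels} closes the argument for all $s \in [1,\infty]$, and the constant is the maximum of the two endpoint constants, again of the required form.
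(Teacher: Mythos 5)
Your proof is correct and follows essentially the same route as the paper: the $s=\infty$ endpoint via pointwise domination by the lattice maximal operator $\widetilde{M}$ and Proposition \ref{proposition:HL}, the $s=1$ endpoint by dualizing to $\Gamma^*=\{T_{\tilde k}\}$ on $L^{p'}(w^{1-p'};X^*)$ and invoking the $s=\infty$ case together with Lemma \ref{lemma:ls-boundedness}\ref{it:dualizels}, and interpolation via Lemma \ref{lemma:ls-boundedness}\ref{it:interpolatels} for the intermediate exponents. The preliminary detour through Proposition \ref{proposition:Rubiols} and Proposition \ref{proposition:rubioUMD} in your $s=1$ step is unnecessary (the paper does not use either here), but it does no harm since the argument you actually carry out is the paper's.
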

The proof is a weighted variant of \cite[Theorem 4.7]{NVW15}, which for the special case where $X$ is an iterated Lebesgue space is presented in \cite[Proposition 3.6]{GLV16}. For convenience of the reader we sketch the proof in the general case.

\begin{proof}
 As $X$ is reflexive and therefore order-continuous, $\widetilde{M}$ is well-defined on $L^p(w;X)$ and we have $T_k f \leq \widetilde{M} f$ pointwise a.e. for all simple $f\colon\R^d \to X$.

If $s = \infty$ take simple functions $f_1,\cdots,f_n\in L^p(w;X)$  and $k_1,\cdots,k_n \in \mc{K}$. Using Proposition \ref{proposition:HL} we have
  \begin{align*}
  \nrms{\sup_{1 \leq j \leq n} \abs{T_{k_j}f_j}}_{L^p(w;X)} &\leq \nrms{\sup_{1\leq j \leq n}\widetilde{M}f_j(x)}_{L^p(w;X)}\\
  &\leq \nrms{\widetilde{M}\has{\sup_{1\leq j \leq n}\abs{f_j}}(x)}_{L^p(w;X)}\\
  &\leq \inc_{X,p} \hab{[w]_{A_{p}}} \,\nrms{\sup_{1 \leq j \leq n} \abs{f_j}}_{L^p(w;X)}.
\end{align*}
The result now follows by the density of simple functions in $L^p(w;X)$.

If $s=1$ we use duality. Note that since $X$ is reflexive we have $L^p(w;X)^* = L^{p'}(w';X^*)^*$ with $w' = w^{1-p'}$ under the duality pairing
\begin{equation}\label{eq:duality}
  \ip{f, g}_{L^{p}(w;X), L^{p'}(w';X^*)} = \int_{\R^d}   \ipb{f(x), g(x)}_{X,X^*}  \dd x
\end{equation}
by Lemma \ref{lemma:weights}\ref{it:weights2} and \cite[Corollary 1.3.22]{HNVW16}. One can routinely check that $T_k^* = T_{\tilde{k}}$ with $\tilde{k}(x)=k(-x)$ and that $k \in \mc{K}$ if and only if $\tilde{k} \in \mc{K}$. Since $X^*$ is also a $\UMD$ Banach function space (see \cite[Proposition 4.2.17]{HNVW16}) we know from the case $s=\infty$ that the adjoint family $\Gamma^*$ is $\ell^\infty$-bounded on $L^{p'}(\R^d,w';X^*)$, so the result follows by Lemma \ref{lemma:ls-boundedness}\ref{it:dualizels}.
Finally if $s \in (1,\infty)$ the result follows by Lemma \ref{lemma:ls-boundedness}\ref{it:interpolatels}.
\end{proof}

 With these preparations done we can now introduce the family of integral operators with operator-valued kernel that we will consider.
 Let $X$ and $Y$ be a Banach function space and let $\mc{T}$ be a family of operators $\R^d\times \R^d \to \mc{L}(X,Y)$ such that $(x,y)\mapsto T(x,y)\xi$ is measurable for all $T \in \mc{T}$ and $\xi\in X$. The integral operators that we will consider are for simple $f:\R^d \to X$ given by
 \begin{equation*}
I_{k,T} f(x)=\int_{\R^d} k(x-y)T(x,y)f(y)\dd y
\end{equation*}
with $k \in \mc{K}$ and $T \in \mc{T}$. If $\nrm{T(x,y)}_{\mc{L}(X,Y)} \leq C$ for all $T\in \mc{T}$ and $x,y \in \R^d$, we have
$$\nrm{I_{k,T} f}_X \leq C \, \abs{k}*\nrm{f}_X \leq C \,M \hab{\nrm{f}_X}.$$ So as before $I_{k,T}$ extends to a bounded linear operator from $L^p(w;X)$ to $L^p(w;Y)$ for all $p \in (1,\infty)$ and $w\in A_p$, and
$$\mc{I}_{\mc{T}}:= \cbraceb{I_{k,T}:k \in \mc{K}, T \in \mc{T}}$$
is uniformly bounded. For the details see \cite[Lemma 3.9]{GLV16}.

 If $X$ and $Y$ are Hilbert spaces, this implies that $\mc{I}_{\mc{T}}$ is also $\ell^2$-bounded from $L^2(\R^d;X)$ to $L^2(\R^d;Y)$, as these notions coincide on Hilbert spaces. However if $X$ and $Y$ are not Hilbert spaces, but a $\UMD$ Banach function space or if we move to weighted $L^p$-spaces, the $\ell^2$-boundedness of $\mc{I}_{\mc{T}}$ is a lot more delicate.

Our main theorem is a quantitative and more general version of Theorem \ref{theorem:mainintro} in the introduction:

\begin{theorem}
\label{theorem:main}
Let $X$ and $Y$ be a $\UMD$ Banach function spaces and let $p,s \in (1,\infty)$. Let $\mc{T}$ be a family of operators $\R^d\times \R^d \to \mc{L}(X,Y)$ such that
\begin{enumerate}[(i)]
\item $(x,y)\mapsto T(x,y)\xi$ is measurable for all $T \in \mc{T}$ and $\xi\in X$.
\item The family of operators $\widetilde{\mc{T}} := \cbrace{T(x,y):T\in \mc{T},\, x,y\in \R^d}$ is $\ell^\sigma$-bounded for all $\sigma\in (1, \infty)$.
\end{enumerate} Then $\mc{I}_{\mc{T}}$ is $\ell^s$-bounded from $L^p(w;X)$ to $L^p(w;Y)$ for all $w \in A_p$  with
\begin{align*}
  \brac{\mc{I}_{\mc{T}}}_{\ell^s} &\leq\inc_{X,Y,p}\hab{[w]_{A_p}} \, \max\cbraceb{ \bracb{\widetilde{\mc{T}}}_{\ell^\sigma},\bracb{\widetilde{\mc{T}}}_{\ell^{\sigma'}} }, \qquad &&\sigma =  1+\frac{1}{\inc_{p,s}\,[w]_{A_p}}\\
  &\leq\inc_{X,Y,\mc{T},p,s}\hab{[w]_{A_p}}.
\end{align*}
\end{theorem}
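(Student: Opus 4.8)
The plan is to prove the quantitative estimate first for an exponent $\sigma$ very close to $1$, obtain it for $\sigma'$ by duality, and reach all $s\in(1,\infty)$ by interpolation; the case of $\sigma$ is the heart of the matter. So fix $\sigma\in(1,s)$ close to $1$, small enough (using Proposition~\ref{proposition:rubioUMD} for each of $X,Y,X^*,Y^*$) that $X,Y,X^*,Y^*$ are $\sigma$-convex and $X^\sigma,Y^\sigma,(X^\sigma)^*,(Y^\sigma)^*$ are $\UMD$ Banach function spaces; the precise choice $\sigma=1+1/\inc_{p,s}([w]_{A_p})$ will drop out below. Then $L^p(w;X)$ and $L^p(w;Y)$ are $\sigma$-convex and order continuous, so Proposition~\ref{proposition:Rubiols} (in the $X\neq Y$ form noted in its proof) reduces the claim ``$\mc{I}_{\mc{T}}$ is $\ell^\sigma$-bounded from $L^p(w;X)$ to $L^p(w;Y)$ with constant $\leq C$'' to the following: for every nonnegative $u$ in the dual of $L^p(w;Y)^\sigma=L^{p/\sigma}(w;Y^\sigma)$ there is a nonnegative $v$ in the dual of $L^p(w;X)^\sigma=L^{p/\sigma}(w;X^\sigma)$ of no larger norm such that
\[
  \has{\int_{\R^d\times S_Y}\abs{I_{k,T}f}^\sigma\,u}^{1/\sigma}\leq C\has{\int_{\R^d\times S_X}\abs{f}^\sigma\,v}^{1/\sigma}
\]
for all simple $f$, all $k\in\mc{K}$ and all $T\in\mc{T}$.

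To produce $v$ I would peel off the operator-valued part and the scalar kernel in turn. For the operator: for a.e.\ $x\in\R^d$ apply Proposition~\ref{proposition:Rubiols} to $\widetilde{\mc{T}}\subseteq\mc{L}(X,Y)$ with exponent $\sigma$ and weight $u(x,\cdot)\in(Y^\sigma)^*$ to obtain a nonnegative $v_x\in(X^\sigma)^*$ with $\nrm{v_x}_{(X^\sigma)^*}\leq\nrm{u(x,\cdot)}_{(Y^\sigma)^*}$ and $\nrm{S\xi}_{L^\sigma(S_Y,u(x,\cdot))}\leq\brac{\widetilde{\mc{T}}}_{\ell^\sigma}\nrm{\xi}_{L^\sigma(S_X,v_x)}$ for all $S\in\widetilde{\mc{T}}$, $\xi\in X$; measurability of $x\mapsto v_x$ is arranged either by inspecting the explicit Rubio de Francia construction or by applying Proposition~\ref{proposition:Rubiols} once to the pointwise multiplication family $\cbrace{f\mapsto[x\mapsto S_xf(x)]:x\mapsto S_x\text{ measurable},\ S_x\in\widetilde{\mc{T}}}$, which is $\ell^\sigma$-bounded from $L^p(w;X)$ to $L^p(w;Y)$ with constant $\leq\brac{\widetilde{\mc{T}}}_{\ell^\sigma}$. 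Combining $\abs{I_{k,T}f(x)}\leq\int_{\R^d}\abs{k(x-y)}\,\abs{T(x,y)f(y)}\dd y$ in $Y$ with Minkowski's integral inequality in $L^\sigma(S_Y,u(x,\cdot))$ and the estimate just obtained, then using $\nrm{k}_{L^1}\leq1$ together with the defining property $\abs{k}*\abs{h}\leq Mh$ of $\mc{K}$, and finally integrating over $x$ and applying the scalar Fefferman--Stein inequality $\int(Mg)\,\mu\leq C_d\int g\,(M\mu)$ in the $x$-variable, one arrives at
\[
  \int_{\R^d\times S_Y}\abs{I_{k,T}f}^\sigma\,u\leq C_d\,\brac{\widetilde{\mc{T}}}_{\ell^\sigma}^\sigma\int_{\R^d\times S_X}\abs{f}^\sigma\,\widetilde{M}(v_\cdot),
\]
where $\widetilde{M}(v_\cdot)$ is the lattice maximal operator applied to the $(X^\sigma)^*$-valued function $x\mapsto v_x$. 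Thus a suitably renormalised multiple of $\widetilde{M}(v_\cdot)$ is the candidate weight.

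It then remains to bound $\nrm{\widetilde{M}(v_\cdot)}_{(L^p(w;X)^\sigma)^*}=\nrm{\widetilde{M}(v_\cdot)}_{L^{(p/\sigma)'}(w^{1-(p/\sigma)'};(X^\sigma)^*)}$ by $\nrm{v_\cdot}_{L^{(p/\sigma)'}(w^{1-(p/\sigma)'};(X^\sigma)^*)}\leq\nrm{u}_{(L^p(w;Y)^\sigma)^*}$, i.e.\ to bound $\widetilde{M}$ on $L^{(p/\sigma)'}(w^{1-(p/\sigma)'};(X^\sigma)^*)$ through Proposition~\ref{proposition:HL}: this needs $(X^\sigma)^*$ to be $\UMD$ (already arranged) and $w^{1-(p/\sigma)'}\in A_{(p/\sigma)'}$, equivalently $w\in A_{p/\sigma}$ by Lemma~\ref{lemma:weights}\ref{it:weights2}, with controlled constant. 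Since $p/\sigma<p$, this does not follow from $w\in A_p$; but by the openness of the Muckenhoupt classes (Lemma~\ref{lemma:weights}\ref{it:weights3}) $w\in A_{p-\varepsilon}$ with $[w]_{A_{p-\varepsilon}}\leq\inc_p([w]_{A_p})$ for $\varepsilon=1/\inc_p([w]_{A_p})$, whence $w\in A_{p/\sigma}$ with controlled constant as soon as $\sigma\leq1+\tfrac{\varepsilon}{p-\varepsilon}$ --- this, together with $\sigma<s$ and the $\sigma$-convexity/$\UMD$ constraints of the first paragraph, is exactly what determines the admissible $\sigma=1+1/\inc_{p,s}([w]_{A_p})$. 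The outcome is $\brac{\mc{I}_{\mc{T}}}_{\ell^\sigma}\leq\inc_{X,p}([w]_{A_p})\brac{\widetilde{\mc{T}}}_{\ell^\sigma}$. Running the same argument for the adjoint family --- $I_{k,T}^*=I_{\widetilde{k},\widetilde{T}}$ with $\widetilde{k}(\cdot)=k(-\cdot)\in\mc{K}$ and $\widetilde{T}(y,x)=T(x,y)^*$; $X^*$ and $Y^*$ are $\UMD$; $w^{1-p'}\in A_{p'}$ with $[w^{1-p'}]_{A_{p'}}^{1/p'}=[w]_{A_p}^{1/p}$ (Lemma~\ref{lemma:weights}\ref{it:weights2}); and $\brac{(\widetilde{\mc{T}})^*}_{\ell^\sigma}=\brac{\widetilde{\mc{T}}}_{\ell^{\sigma'}}$ (Lemma~\ref{lemma:ls-boundedness}\ref{it:dualizels}) --- and dualising back (Lemma~\ref{lemma:ls-boundedness}\ref{it:dualizels}) yields $\brac{\mc{I}_{\mc{T}}}_{\ell^{\sigma'}}\leq\inc_{X^*,p'}([w]_{A_p})\brac{\widetilde{\mc{T}}}_{\ell^{\sigma'}}$. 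Since $\sigma<s<\sigma'$, Lemma~\ref{lemma:ls-boundedness}\ref{it:interpolatels} interpolates these two to the first asserted bound, and the second follows because $\tau\mapsto\brac{\widetilde{\mc{T}}}_{\ell^\tau}$ is finite and locally bounded on $(1,\infty)$ (again by Lemma~\ref{lemma:ls-boundedness}\ref{it:interpolatels}), after replacing the constant by a nondecreasing majorant.

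The step I expect to be the main obstacle is the reconciliation in the second and third paragraphs: peeling off $T(x,y)$ via Rubio de Francia unavoidably produces a weight $v_x$ indexed by the \emph{output} variable $x$, while the required estimate wants a weight indexed by the \emph{input} variable; the Fefferman--Stein inequality repairs this at the cost of a maximal operator, and the whole scheme only closes because the openness of $A_p$ permits $\sigma$ to be taken close enough to $1$ that the ensuing weighted lattice maximal bound has a constant depending on $[w]_{A_p}$ only through an increasing function. The measurable dependence $x\mapsto v_x$ and the routine verification that all the Banach function space powers and duals that appear are again $\UMD$ are the secondary points needing care.
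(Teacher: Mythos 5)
Your overall architecture --- prove the quantitative bound for a single $\sigma$ close to $1$, transfer it to $\sigma'$ by duality using $I_{k,T}^*=I_{\tilde k,\tilde T}$, and reach the given $s$ by interpolation via Lemma \ref{lemma:ls-boundedness}\ref{it:interpolatels}, with the admissible $\sigma$ dictated by Proposition \ref{proposition:rubioUMD} and the openness of the Muckenhoupt classes --- is exactly the paper's Steps 1--3. Where you genuinely diverge is in the core estimate for small $\sigma$. The paper (Proposition \ref{proposition:mainprel}) fixes $x$, picks a norming functional $u_x\in (Y^\sigma)^*$ by Hahn--Banach, applies Proposition \ref{proposition:Rubiols} to $\widetilde{\mc{T}}$ to get $v_x$, and then immediately discharges $v_x$ using only $\nrm{v_x}_{(X^\sigma)^*}\leq 1$, landing on the pointwise bound $\nrms{\sum_j\abs{I_jf_j(x)}^\sigma}_{Y^\sigma}\leq \bracb{\widetilde{\mc{T}}}_{\ell^\sigma}\nrms{\sum_j(\abs{k_j}*\abs{f_j}^\sigma)(x)}_{X^\sigma}$; the scalar-kernel convolutions are then absorbed by their $\ell^1$-boundedness on $L^{p/\sigma}(w;X^\sigma)$ (Proposition \ref{proposition:kernel1sbdd}). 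You instead run Proposition \ref{proposition:Rubiols} ``outwards'' on $\mc{I}_{\mc{T}}$ acting between the full spaces $L^p(w;X)$ and $L^p(w;Y)$, and exhibit the required weight explicitly as a normalized $\widetilde{M}(v_\cdot)$ via the Fefferman--Stein dual inequality. The two mechanisms are at bottom the same (the $\ell^1$-bound for $\cbrace{T_k}$ is itself proven by dualizing against the lattice maximal operator), so your route buys nothing extra, but it is a legitimate alternative and your bookkeeping of exponents, weights and constants is consistent with the paper's.

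The one point that is not airtight is the one you flagged yourself: the measurability of $x\mapsto v_x$, which your argument genuinely needs because you apply $\widetilde{M}$ to this function, whereas the paper's ordering of quantifiers is arranged precisely so that $v_x$ never has to be assembled into a function of $x$. Your first proposed fix (tracking measurability through the Rubio de Francia iteration) is plausible but must actually be carried out. Your second fix, as stated, does not suffice: applying Proposition \ref{proposition:Rubiols} to the pointwise multiplication family produces, for a given $u$, a single weight $v$ on $\R^d\times S_X$ satisfying only the \emph{integrated} inequality $\int \abs{S_xf(x)}^\sigma u \leq C\int\abs{f}^\sigma v$, whereas your convolution estimate requires, for a.e.\ \emph{fixed} $x$, the full family inequality $\int_{S_Y}\abs{T(x,y)\xi}^\sigma u(x,\cdot)\dd\mu_Y\leq C\int_{S_X}\abs{\xi}^\sigma v(x,\cdot)\dd\mu_X$ simultaneously for all $y\in\R^d$ and $\xi\in X$; extracting the latter from the former needs an additional localization and density argument you have not supplied. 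Either complete one of these fixes, or reorganize the estimate as in the paper so that the issue never arises.
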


We will first prove a result assuming the $\ell^s$-boundedness of $\widetilde{\mc{T}}$ for a fixed $s\in [1, \infty)$.
\begin{proposition}
\label{proposition:mainprel}
Fix $1\leq s \leq r <p < \infty$ and let $X$ and $Y$ be $s$-convex Banach function spaces such that $X^s$ has the $\UMD$ property. Let $\mc{T}$ be a family of operators $\R^d\times \R^d \to \mc{L}(X,Y)$ such that
\begin{enumerate}[(i)]
\item $(x,y)\mapsto T(x,y)\xi$ is measurable for all $T \in \mc{T}$ and $\xi\in X$.
\item The family of operators $\widetilde{\mc{T}} := \cbrace{T(x,y): T\in \mc{T}, \, x,y\in \R^d}$ is $\ell^s$-bounded.
\end{enumerate}
Then $\mc{I}_{\mc{T}}$ is $\ell^s$-bounded from $L^p(w;X)$ to $L^p(w;Y)$ for all  $w \in A_{{p/s}}$ with
\begin{equation*}
  [\mc{I}_{\mc{T}}]_{\ell^s} \leq \inc_{X,p,r}\hab{[w]_{A_{{p/s}}}} \bracb{\widetilde{\mc{T}}}_{\ell^s}.
\end{equation*}
\end{proposition}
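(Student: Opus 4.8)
The plan is to deduce the $\ell^s$-boundedness of $\mc{I}_{\mc{T}}$ from the Rubio de Francia characterization in Proposition~\ref{proposition:Rubiols}, applied twice: once to reduce the assertion about $\mc{I}_{\mc{T}}$ acting from $L^p(w;X)$ to $L^p(w;Y)$ to a single weighted test inequality, and once to linearize the action of the operator-valued coefficients $T(x,y)$. Since $p>s$, the spaces $L^p(w;X)$ and $L^p(w;Y)$ are $s$-convex, and they are order continuous (using that $X^s$, hence $X$, is reflexive), so Proposition~\ref{proposition:Rubiols} applies to $\mc{I}_{\mc{T}}$. Writing $\theta:=p/s$ and $w':=w^{1-\theta'}$, one identifies $(L^p(w;X)^s)^*=L^{\theta'}(w';(X^s)^*)$ and likewise for $Y$, so it suffices to show: for every nonnegative $U\in L^{\theta'}(w';(Y^s)^*)$ there is a nonnegative $V\in L^{\theta'}(w';(X^s)^*)$ with $\nrm{V}\le\nrm{U}$ and
\begin{equation*}
  \has{\int_{\R^d}\int_{S_Y}\abs{I_{k,T}f(x)}^s\,U(x)\dd\mu_Y\dd x}^{1/s}\le C\has{\int_{\R^d}\int_{S_X}\abs{f(x)}^s\,V(x)\dd\mu_X\dd x}^{1/s}
\end{equation*}
for all simple $f\colon\R^d\to X$ and all $I_{k,T}\in\mc{I}_{\mc{T}}$, with $C\le\inc_{X,p,r}\hab{[w]_{A_{p/s}}}\bracb{\widetilde{\mc{T}}}_{\ell^s}$.

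The first ingredient of this test inequality is a pointwise bound: H\"older's inequality in the $y$-variable, together with $\nrm{k}_{L^1(\R^d)}\le1$ (which holds for every $k\in\mc{K}$), gives
\begin{equation*}
  \abs{I_{k,T}f(x)}^s\le\int_{\R^d}\abs{k(x-y)}\abs{T(x,y)f(y)}^s\dd y\qquad\text{in }S_Y.
\end{equation*}
Now fix a nonnegative $U$ as above. For almost every $x\in\R^d$ I would apply Proposition~\ref{proposition:Rubiols} to the $\ell^s$-bounded family $\widetilde{\mc{T}}$ with the single weight $u=U(x)$, obtaining a nonnegative $g(x)\in(X^s)^*$ with $\nrm{g(x)}_{(X^s)^*}\le\nrm{U(x)}_{(Y^s)^*}$ such that $\int_{S_Y}\abs{T(x,y)\xi}^s U(x)\dd\mu_Y\le\bracb{\widetilde{\mc{T}}}_{\ell^s}^s\int_{S_X}\abs{\xi}^s g(x)\dd\mu_X$ for all $\xi\in X$ and all $x,y\in\R^d$. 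Substituting this into the pointwise bound, integrating over $S_Y$ and then over $\R^d$, and applying Fubini's theorem yields
\begin{equation*}
  \int_{\R^d}\int_{S_Y}\abs{I_{k,T}f(x)}^s U(x)\dd\mu_Y\dd x\le\bracb{\widetilde{\mc{T}}}_{\ell^s}^s\int_{\R^d}\int_{S_X}\abs{f(y)}^s\has{\int_{\R^d}\abs{k(x-y)}g(x)\dd x}\dd\mu_X\dd y.
\end{equation*}
Because $k\in\mc{K}$ implies $\tilde{k}\in\mc{K}$, the inner convolution is dominated, pointwise in $S_X$, by the lattice Hardy--Littlewood maximal operator: $\int_{\R^d}\abs{k(x-y)}g(x)\dd x\le\widetilde{M}g(y)$.

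To close the estimate, note that $X^s$ has the $\UMD$ property, hence so does $(X^s)^*$, and that $w'=w^{1-(p/s)'}\in A_{(p/s)'}$ by Lemma~\ref{lemma:weights}\ref{it:weights2}. Therefore Proposition~\ref{proposition:HL} gives $\nrmb{\widetilde{M}g}_{L^{\theta'}(w';(X^s)^*)}\le c\,\nrm{g}_{L^{\theta'}(w';(X^s)^*)}$, where $c$ is a nondecreasing function of $[w]_{A_{p/s}}$. Setting $V:=c^{-1}\widetilde{M}g$ we obtain $\nrm{V}_{L^{\theta'}(w';(X^s)^*)}\le\nrm{g}_{L^{\theta'}(w';(X^s)^*)}\le\nrm{U}_{L^{\theta'}(w';(Y^s)^*)}$, and the previous display becomes exactly the required weighted inequality with $C=c^{1/s}\bracb{\widetilde{\mc{T}}}_{\ell^s}$. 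An application of Proposition~\ref{proposition:Rubiols} in the reverse direction, together with a routine density argument, then completes the proof.

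The step I expect to be the main obstacle is the \emph{measurable dependence} of the Rubio de Francia weights $x\mapsto g(x)$: Proposition~\ref{proposition:Rubiols} supplies $g(x)$ for each individual weight $U(x)$, whereas the argument requires $g$ to be a measurable $(X^s)^*$-valued function of $x$. This should be handled either by noting that the weight produced by Rubio de Francia's iteration depends measurably (in a controlled way) on the input weight, so that a measurable selection is available, or by first reducing to simple $U$ by density in $L^{\theta'}(w';(Y^s)^*)$ and patching. A secondary, purely quantitative point is to rewrite the constant from Proposition~\ref{proposition:HL}---which a priori depends on $(X^s)^*$, on $(p/s)'$, and on $[w^{1-(p/s)'}]_{A_{(p/s)'}}$---as a single nondecreasing function $\inc_{X,p,r}$ of $[w]_{A_{p/s}}$; here the slack exponent $r\in[s,p)$ provides the room needed to invoke the self-improvement of the Muckenhoupt condition in Lemma~\ref{lemma:weights}\ref{it:weights3}.
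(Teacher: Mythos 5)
Your strategy uses exactly the same ingredients as the paper -- the pointwise H\"older bound $\abs{I_{k,T}f(x)}^s\le\int\abs{k(x-y)}\abs{T(x,y)f(y)}^s\dd y$, the Rubio de Francia characterization of the $\ell^s$-boundedness of $\widetilde{\mc{T}}$, and the boundedness of $\widetilde{M}$ on $L^{(p/s)'}(w^{1-(p/s)'};(X^s)^*)$ -- but you assemble them in the opposite order, and this creates the one genuine gap, which you correctly identify yourself: the measurable dependence of the Rubio de Francia weight $x\mapsto g(x)$ on $x$. Because you run Proposition \ref{proposition:Rubiols} at the outer level (for $\mc{I}_{\mc{T}}$ on $L^p(w;X)$), you must produce a single function $V\in L^{(p/s)'}(w';(X^s)^*)$, and hence you need $g$ to be a measurable $(X^s)^*$-valued function before you can even write $\widetilde{M}g$ or $\nrm{g}_{L^{(p/s)'}(w';(X^s)^*)}$. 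Proposition \ref{proposition:Rubiols} gives no such selection, and neither of your proposed fixes is immediate: tracking measurability through the Rubio de Francia iteration means reopening the proof of that proposition, and reducing to simple $U$ is not a routine density argument either, since the characterization quantifies over \emph{all} nonnegative $u$ (one must instead observe that simple $U$ are norming for $L^{p/s}(w;Y^s)$ and redo the implication (ii)$\Rightarrow$(i) by hand). As written, the proof is therefore incomplete at its central step.

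The paper's proof is organized precisely to avoid this issue. It fixes the finite families $(I_j,f_j)$ first, and for each \emph{fixed} $x$ chooses by Hahn--Banach a single $u_x\in(Y^s)^*$ of norm one attaining $\nrm{\sum_j\abs{I_jf_j(x)}^s}_{Y^s}$; Proposition \ref{proposition:Rubiols} then supplies $v_x$, which is discharged \emph{immediately} via $\int_{S_X}h\,v_x\dd\mu_X\le\nrm{h}_{X^s}\nrm{v_x}_{(X^s)^*}\le\nrm{h}_{X^s}$. No function $x\mapsto v_x$ is ever integrated in $x$, so no measurable selection is needed. What remains is the scalar-kernel estimate $\nrm{\sum_j\abs{k_j}*\abs{f_j}^s}_{L^{p/s}(w;X^s)}\lesssim\nrm{\sum_j\abs{f_j}^s}_{L^{p/s}(w;X^s)}$, i.e. the $\ell^1$-boundedness of $\cbrace{T_k:k\in\mc{K}}$ on $L^{p/s}(w;X^s)$ from Proposition \ref{proposition:kernel1sbdd}; that proposition is itself proved by dualizing the $\ell^\infty$ case, which is where $\widetilde{M}$ on $(X^s)^*$ and Proposition \ref{proposition:HL} enter -- the same analytic input as in your closing step, but packaged so that the weight never has to be selected measurably. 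If you want to salvage your outer-level formulation, the cleanest repair is the ``simple $U$ plus norming'' route sketched above; otherwise I recommend reordering the argument as the paper does. Your final quantitative remark (using Lemma \ref{lemma:weights}\ref{it:weights3} and the slack $r<p$ to make the constant a nondecreasing function of $[w]_{A_{p/s}}$ uniformly in $s$) is correct and matches the paper.
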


\begin{proof}
Let $(S_X,\mu_X)$ and $(S_Y,\mu_Y)$ be the measure spaces associated to $X$ and $Y$ respectively. For $j=1,\cdots,n$ take $I_{j}\in \mc{I}_{\mc{T}}$ and let $k_j \in \mc{K}$ and $T_j\in\mc{T}$ be such that $I_j$ = $I_{k_j,T_j}$. Fix simple functions $f_1,\cdots,f_n \in L^p(w;X)$ and note that
\begin{equation}\label{eq:sconvexI}
  \nrms{\has{\sum_{j=1}^n\abs*{I_{j} f_j}^s}^{1/s}}_{L^p(w;Y)} = \nrms{\sum_{j=1}^n\abs*{I_{j} f_j}^s}_{L^{p/s}\ha*{w;Y^s}}^{1/s}.
\end{equation}
Fix $x \in \R^d$, then by Hahn-Banach we can find a nonnegative $u_x \in (Y^s)^*$ with $\nrm{u_x}_{(X^s)^*} = 1$ such that
\begin{equation}
\label{eq:addweight}
  \nrms{\sum_{j=1}^n\abs*{I_j f_j(x)}^s}_{Y^s}=\sum_{j=1}^n\int_{S_Y}\abs*{I_j f_j(x)}^s u_x \dd \mu_Y.
\end{equation}
With Proposition \ref{proposition:Rubiols} we can then find a nonnegative $v_x \in (X^s)^*$ with $\nrm{v_x}_{(X^s)^*} \leq 1$ such that
\begin{equation}\label{eq:rubiocorapp}
  \int_{S_Y} \abs{T_j(x,y)\xi}^s v_x \dd\mu_Y \leq \bracb{\widetilde{\mc{T}}}_{\ell^s} \,\int_{S_X} \abs{\xi}^sv_x \dd\mu_X
\end{equation}
for $j=1,\cdots,n$, $y \in \R^d$ and $\xi \in X$. Since $\nrm{k_j}_{L^1(\R^d)}\leq 1$ by \cite[Lemma 4.3]{NVW15}, Holder's inequality yields
\begin{equation}\label{eq:jensen}
  \abs{I_j f_j(x)}^s\leq \int_{\R^d}  |k_j(x-y)|\abs{T_{j}(x,y)f_j(y)}^s\dd y.
\end{equation}
Applying \eqref{eq:jensen} and \eqref{eq:rubiocorapp} successively we get
\begin{align*}
\sum_{j=1}^n \int_{S_Y}\abs*{I_jf_j(x)}^s u_x \dd \mu_Y
&\leq \sum_{j=1}^n \int_{S_Y} \int_{\R^d} |k_j(x-y)| \abs{T_{j}(x,y)f_j(y)}^s\dd y \,  u_x \dd \mu_Y\\
&=\sum_{j=1}^n\int_{\R^d} \abs{k_j(x-y)} \int_{S_Y}\abs{T_{j}(x,y)f_j(y)}^s \, u_x \dd \mu_Y\dd y\\
&\leq \bracb{\widetilde{\mc{T}}}_{\ell^s}\sum_{j=1}^n\int_{S_X}\int_{\R^d} |k_j(x-y)| \abs{f_j(y)}^s \dd y \, v_x \dd \mu_X\\
&\leq \bracb{\widetilde{\mc{T}}}_{\ell^s} \, \nrms{\sum_{j=1}^n (\abs{k_j}*\abs{f_j}^s)(x)}_{X^s},
\end{align*}
using duality and $\nrm{v_x}_{(X^s)^*} \leq 1$ in the last step. We can now use the $\ell^1$-bounded\-ness result of Proposition \ref{proposition:kernel1sbdd}, since $(X^s)^*$ has the $\UMD$ property by \cite[Proposition 4.2.17]{HNVW17}. Combined with \eqref{eq:sconvexI} and
\eqref{eq:addweight} we obtain
\begin{align*}
 \nrms{ \has{\sum_{j=1}^n\abs*{I_j f_j}^s}^{{1/s}}}_{L^p(w;Y)}  & \leq \bracb{\widetilde{\mc{T}}}_{\ell^s} \, \nrms{\sum_{j=1}^n\abs{k_j}*\abs{f_j}^s}_{L^{{p/s}}\ha{w;X^s}}^{\frac{1}{s}}
 \\ &\leq \inc_{X,p/s} \hab{[w]_{A_{p/s}}} \,\bracb{\widetilde{\mc{T}}}_{\ell^s} \,
  \nrms{\sum_{j=1}^n \abs{f_j}^s}_{L^{{p/s}}\ha{w;X^s}}^{1/s}\\
 &\leq \inc_{X,p,r}\hab{[w]_{A_{p/s}}} \bracb{\widetilde{\mc{T}}}_{\ell^s}\nrms{\has{\sum_{j=1}^n \abs{f_j}^s}^{\frac{1}{s}}}_{L^{p}\ha{w;X}},
\end{align*}
where we can pick the increasing function $\phi$ in the last step independent of $s$, since the increasing function in Proposition \ref{proposition:kernel1sbdd} depends continuously on $p$. This can for example be seen by writing out the exact dependence on $p$ in Theorem \ref{proposition:HL} using \cite[Theorem 1.3]{HL18b} and \cite[Theorem 3.1]{Mo12}.
\end{proof}

Using this preparatory proposition, we will now prove Theorem \ref{theorem:main}.

\begin{proof}[Proof of Theorem \ref{theorem:main}]
Let $w \in A_p$. We shall prove the theorem in three steps.

\textbf{Step 1.}
First we shall prove the theorem very small $s>1$. By Proposition \ref{proposition:rubioUMD} we know that there exists a $\sigma_{X,Y} \in (1,p)$ such that $X$ and $Y$ are $s$-convex and $X^s$ has the $\UMD$ property for all $s \in [1,\sigma_X]$. By Lemma \ref{lemma:weights}\ref{it:weights3} we can then find a $\sigma_{p,w} \in (1, \sigma_{X,Y}]$ such that for all $s\in [1, \sigma_{p,w}]$
\begin{equation*}
  [w]_{A_{{p/s}}}\leq [w]_{A_{{p/\sigma_{p,w}}}}\leq \inc_{p}\hab{ [w]_{A_p}}
\end{equation*}
Let $\sigma_1 = \min\cbrace{\sigma_{X,Y},\sigma_{p,w}}$, then by Proposition \ref{proposition:mainprel} we know that $\mc{I}_{\mc{T}}$ is $\ell^s$-bounded from $L^p(w;X)$ to $L^p(w;Y)$  for $s\in (1, \sigma_1]$ with
\begin{equation}\label{eq:smalls}
\brac{\mc{I}_{\mc{T}}}_{\ell^s} \leq  \inc_{X,p,\sigma_{X,Y}}([w]_{A_{p/s}}) \bracb{\widetilde{\mc{T}}}_{\ell^s}  \leq  \inc_{X,Y,p}([w]_{A_{p}}) \bracb{\widetilde{\mc{T}}}_{\ell^s}.
\end{equation}

\textbf{Step 2.} Now we use a duality argument to prove the theorem for large $s<\infty$.
 As noted in the proof of Proposition \ref{proposition:kernel1sbdd}, we have $L^p(w;X)^* = L^{p'}(w';X^*)$ with $w' = w^{1-p'}$ under the duality pairing as in \eqref{eq:duality} and similarly for $Y$. Furthermore $X^*$ and $Y^*$ have the $\UMD$ property.

 It is routine to check that under this duality $I_{k,T}^* = I_{\tilde{k},\tilde{T}}$ with $\tilde{k}(x)=k(-x)$ and $\tilde{T}(x,y)=T^*(y,x)$ for any $I_{k,T} \in \mc{I}_{\mc{T}}$.
Trivially  $\tilde{k}\in \mc{K}$ if and only if $k \in \mc{K}$ and by Proposition \ref{proposition:kernel1sbdd}\ref{it:dualizels} the adjoint family $\widetilde{T}^*$ is $\ell^{\sigma'}$-bounded with
$$\bracb{\widetilde{\mc{T}}^*}_{\ell^{\sigma'}} = \bracb{\widetilde{\mc{T}}}_{\ell^\sigma}$$
for all $\sigma \in (1,\infty)$.
Therefore, it follows from step 1 that there is a $\sigma_2>1$
such that $\mc{I}_{\mc{T}}^*$ is $\ell^{s}$-bounded from $L^{p'}(w';Y^*)$ to $L^{p'}(w';X^*)$  for all $s\in (1, \sigma_2]$. Using  Proposition \ref{proposition:kernel1sbdd}\ref{it:dualizels} again, we deduce that $\mc{I}_{\mc{T}}$  is $\ell^s$-bounded from $L^{p}(w;X)$ to $L^{p}(w;Y)$ for all $s \in [\sigma_2', \infty)$ with
\begin{equation}\label{eq:larges}
\brac{\mc{I}_{\mc{T}}}_{\ell^s} = \brac{\mc{I}_{\mc{T}}^*}_{\ell^{s'}} \leq  \inc_{X,Y,p}\hab{[w]_{A_{p}}} \bracb{\widetilde{\mc{T}}}_{\ell^s}.
\end{equation}

\textbf{Step 3.} We can finish the prove by an interpolation argument for $s \in (\sigma_1,\sigma_2')$. By Proposition \ref{proposition:Rubiols}\ref{it:interpolatels} we get for $s \in (\sigma_1,\sigma_2')$ that $\mc{I}_\mc{T}$ is $\ell^s$-bounded from $L^{p}(w;X)$ to $L^{p}(w;Y)$ with
\begin{equation} \label{eq:inters}
  \brac{\mc{I}_{\mc{T}}}_{\ell^s} \leq  \inc_{X,Y,p}([w]_{A_{p}}) \, \max\cbraces{\bracb{\widetilde{\mc{T}}}_{\ell^{\sigma_1}}, \bracb{\widetilde{\mc{T}}}_{\ell^{\sigma_2'}}}.
\end{equation}
Now note that by Lemma \ref{lemma:weights} there is a $\sigma \in (1,\infty)$ such that $\sigma < \sigma_1,\sigma_2$ and $\sigma < s <\sigma'$ and
\begin{equation*}
  \sigma = 1 + \frac{1}{\inc_{p,s}([w]_{A_p})}.
\end{equation*}
Thus combining \eqref{eq:smalls}, \eqref{eq:larges} and \eqref{eq:inters} we obtain
\begin{align*}
  \brac{\mc{I}_{\mc{T}}}_{\ell^s} &\leq\inc_{X,Y,p}\hab{[w]_{A_p}} \, \max\cbraceb{ \bracb{\widetilde{\mc{T}}}_{\ell^\sigma},\bracb{\widetilde{\mc{T}}}_{\ell^{\sigma'}} }
  \leq\inc_{X,Y,\mc{T},p,s}\hab{[w]_{A_p}},
\end{align*}
using the fact that $t \mapsto \max\cbraceb{ \bracb{\widetilde{\mc{T}}}_{\ell^t},\bracb{\widetilde{\mc{T}}}_{\ell^{t'}} }$ is increasing for $t  \to 1$ by Proposition \ref{proposition:Rubiols}\ref{it:interpolatels}. This proves the theorem.
\end{proof}
\bigskip
\begin{remark}~
\begin{itemize}
 \item From Theorem \ref{theorem:main} one can also conclude that $\mc{I}_{\mc{T}}$ is $\mc{R}$-bounded, since $\mc{R}$- and $\ell^2$-boundedness coincide if $X$ and $Y$ have the $\UMD$ property, see e.g. \cite[Theorem 8.1.3]{HNVW17}.
  \item The $\UMD$ assumptions in Theorem \ref{theorem:main} are necessary. Indeed already if $X=Y$, $w=1$ and if $\widetilde{\mc{T}}$ only contains the identity operator, it is shown in \cite{KLW19} that the $\ell^2$-boundedness of $\mc{I}_{\mc{T}}$ implies the $\UMD$ property of $X$.
  \item The main result of \cite{GLV16} is Theorem \ref{theorem:main} for the special case $X=Y=L^q(S)$. In applications to systems of PDEs one needs Theorem \ref{theorem:main} on $L^q(S;\C^n)$ with $s=2$, see e.g. \cite{GV17b}. This could be deduced from the proof of \cite[Theorem 3.10]{GLV16}, by replacing absolute values by norms in $\C^n$. In our more general statement the case $L^q(S;\C^n)$ is included, since $L^q(S;\C^n)$ is a $\UMD$ Banach function space over $S\times \cbrace{1,\cdots,n}$

\end{itemize}
\end{remark}

If $X=Y$ is a rearrangement invariant Banach function space on $\R^e$, we can check the $\ell^\sigma$-bounded\-ness of $\widetilde{\mc{T}}$ for all $\sigma \in (1,\infty)$ by weighted extrapolation. Examples of such Banach function spaces are Lebesgue, Lorentz and Orlicz spaces. See \cite[Section 2.a]{LT79} for an introduction to rearrangement invariant Banach function spaces.

\begin{corollary}\label{corollary:main}
  Let $X$ be a rearrangement invariant $\UMD$ Banach function space on $\R^e$ and let $p,s \in (1,\infty)$. Let $\mc{T}$ be a family of operators $\R^d\times \R^d \to \mc{L}(X)$ such that
\begin{enumerate}[(i)]
\item $(x,y)\mapsto T(x,y)\xi$ is measurable for all $T \in \mc{T}$ and $\xi\in X$.
\item For some $q \in (1,\infty)$ and all $v \in A_q$ we have
$$\sup_{T\in \mc{T},\,x,y \in \R^d} \nrm{T(x,y)}_{\mc{L}(L^q(v))} \leq \inc_{\mc{T},q}\hab{[v]_{A_q}}$$
\end{enumerate}
Then $\mc{I}_{\mc{T}}$ is $\ell^s$-bounded on $L^p(w;X)$ for all $w \in A_p$  with
\begin{align*}
  \brac{\mc{I}_{\mc{T}}}_{\ell^s} &\leq\inc_{X,Y,\mc{T},p,q,s}\hab{[w]_{A_p}}.
\end{align*}
\end{corollary}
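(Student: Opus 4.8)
The plan is to obtain Corollary~\ref{corollary:main} from Theorem~\ref{theorem:main}: hypothesis~(i) of that theorem is the present hypothesis~(i), so the only thing to establish is that $\widetilde{\mc{T}} := \cbrace{T(x,y): T\in \mc{T},\, x,y\in \R^d}$ is $\ell^\sigma$-bounded on $X$ for every $\sigma\in(1,\infty)$, with $\ell^\sigma$-bound of the admissible form. I would first reduce this to small $\sigma$. As $X$ is $\UMD$, hence reflexive, $X^*$ is again a rearrangement invariant $\UMD$ Banach function space on $\R^e$, and by the duality of weighted $L^q$-spaces together with Lemma~\ref{lemma:weights}\ref{it:weights2} the adjoint family $\widetilde{\mc{T}}^*$ satisfies a hypothesis of the form~(ii) with $q$ replaced by $q'$. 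Hence, once one knows that \emph{any} family on $L^0(\R^e)$ satisfying such a hypothesis is $\ell^\sigma$-bounded on \emph{any} rearrangement invariant $\UMD$ Banach function space on $\R^e$ for all $\sigma$ in some right neighbourhood of $1$, applying this to $(X,\widetilde{\mc{T}})$ and, via Lemma~\ref{lemma:ls-boundedness}\ref{it:dualizels}, to $(X^*,\widetilde{\mc{T}}^*)$ gives $\ell^\sigma$-boundedness of $\widetilde{\mc{T}}$ on $X$ for $\sigma$ near $1$ and near $\infty$, and Lemma~\ref{lemma:ls-boundedness}\ref{it:interpolatels} fills in the range in between.

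For this core statement I would argue in two steps. First, I would feed hypothesis~(ii) into the quantitative Rubio de Francia extrapolation theorem \cite{GR85}: since the bound on $\nrm{T(x,y)}_{\mc{L}(L^q(v))}$ depends on the data only through $[v]_{A_q}$, it is uniform over the family, and extrapolation gives that $\widetilde{\mc{T}}$ is uniformly bounded on $L^\sigma(v)$ for every $\sigma\in(1,\infty)$ and $v\in A_\sigma$, with $\sup_{S\in\widetilde{\mc{T}}}\nrm{S}_{\mc{L}(L^\sigma(v))}\leq\inc_{\mc{T},q,\sigma}\hab{[v]_{A_\sigma}}$. Second, I would verify condition~(ii) of Proposition~\ref{proposition:Rubiols} with $X=Y$ and $s=\sigma$, taking $\sigma$ small enough that $X$ is $\sigma$-convex and $X^\sigma$ is a $\UMD$ Banach function space (Proposition~\ref{proposition:rubioUMD}); then $(X^\sigma)^*$ is a reflexive rearrangement invariant space on $\R^e$, so its Boyd indices lie in $(1,\infty)$ and the Hardy--Littlewood maximal operator $M$ on $\R^e$ is bounded on it. Given a nonnegative $u\in(X^\sigma)^*$, I would run the Rubio de Francia construction for $M$ on $(X^\sigma)^*$, namely $\mc{R}u:=\sum_{k\geq0}(2\nrm{M}_{\mc{L}((X^\sigma)^*)})^{-k}M^ku$, so that $\mc{R}u\geq u$, $\nrm{\mc{R}u}_{(X^\sigma)^*}\leq2\nrm{u}_{(X^\sigma)^*}$ and $\mc{R}u\in A_1$ with $[\mc{R}u]_{A_1}\leq2\nrm{M}_{\mc{L}((X^\sigma)^*)}=:c_{X,\sigma}$, and then take the rescaled weight $v:=\bigl(\nrm{u}_{(X^\sigma)^*}/\nrm{\mc{R}u}_{(X^\sigma)^*}\bigr)\mc{R}u$, for which $\nrm{v}_{(X^\sigma)^*}=\nrm{u}_{(X^\sigma)^*}$, $u\leq2v$, and $[v]_{A_\sigma}\leq[v]_{A_1}=[\mc{R}u]_{A_1}\leq c_{X,\sigma}$. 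For simple $\xi$ (dense in $X$ and contained in $L^\sigma(v)$) and $S\in\widetilde{\mc{T}}$ the first step then gives
\begin{equation*}
  \has{\int_{\R^e}\abs{S\xi}^\sigma u}^{1/\sigma}\leq 2^{1/\sigma}\has{\int_{\R^e}\abs{S\xi}^\sigma v}^{1/\sigma}\leq 2^{1/\sigma}\inc_{\mc{T},q,\sigma}(c_{X,\sigma})\has{\int_{\R^e}\abs{\xi}^\sigma v}^{1/\sigma},
\end{equation*}
where, crucially, the constant no longer depends on $u$ because $[v]_{A_\sigma}\leq c_{X,\sigma}$ and $\inc$ is nondecreasing. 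After extending to all $\xi\in X$ by density, this is condition~(ii) of Proposition~\ref{proposition:Rubiols}, so $\widetilde{\mc{T}}$ is $\ell^\sigma$-bounded on $X$ with $[\widetilde{\mc{T}}]_{\ell^\sigma}\leq2\inc_{\mc{T},q,\sigma}(c_{X,\sigma})$ for $\sigma$ near $1$. Combined with the dual bound, substituting $\sigma=1+\inc_{p,s}([w]_{A_p})^{-1}$ in Theorem~\ref{theorem:main} produces the asserted bound of the form $\inc_{X,\mc{T},p,q,s}([w]_{A_p})$.

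The step I expect to be the main obstacle is the second one above. One has to ensure that the Rubio de Francia construction is really at one's disposal, i.e.\ that $M$ is bounded on $(X^\sigma)^*$, which rests on the standard but slightly delicate fact that a reflexive (a fortiori $\UMD$) rearrangement invariant space has Boyd indices in $(1,\infty)$; and one has to track the dependence of $c_{X,\sigma}$ and of the extrapolation constant $\inc_{\mc{T},q,\sigma}$ on $\sigma$, to be sure it stays controlled as $\sigma\to1$, so that substituting $\sigma=1+\inc_{p,s}([w]_{A_p})^{-1}$ in Theorem~\ref{theorem:main} genuinely yields a nondecreasing function of $[w]_{A_p}$. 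The reduction to small $\sigma$ and the concluding interpolation, by contrast, are routine.
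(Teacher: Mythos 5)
Your argument is correct in substance but takes a genuinely different route from the paper. The paper reduces the corollary to Theorem \ref{theorem:main} exactly as you do, but obtains the $\ell^\sigma$-boundedness of $\widetilde{\mc{T}}$ for \emph{all} $\sigma\in(1,\infty)$ in a single stroke by citing the extrapolation theorem for rearrangement invariant spaces of Curbera, Garc\'ia-Cuerva, Martell and P\'erez \cite[Theorem 2.1]{CGM06}, applied to the family of pairs $\hab{\abs{T(x,y)\xi},\abs{\xi}}$ with $\xi$ ranging over compactly supported bounded functions; the only structural input is $q_X<\infty$, which follows from the $\UMD$ property. What you propose is essentially a self-contained proof of the relevant case of that theorem: classical scalar Rubio de Francia extrapolation to get uniform $L^\sigma(v)$-bounds for all $v \in A_\sigma$, the Rubio de Francia algorithm on $(X^\sigma)^*$ to manufacture an $A_1$-majorant $v$ of $u$ with controlled norm and characteristic, and Proposition \ref{proposition:Rubiols} to convert this into $\ell^\sigma$-boundedness --- followed by duality and interpolation to cover all $\sigma$, which is necessary because Proposition \ref{proposition:Rubiols} requires $\sigma$-convexity of $X$ and hence only applies for $\sigma$ close to $1$. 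Your route is longer but makes the mechanism explicit and reuses Proposition \ref{proposition:Rubiols}, which is already the engine of Proposition \ref{proposition:mainprel}; the paper's route is shorter, avoids the small-$\sigma$ reduction entirely, and also yields the weighted variants recorded in Remark \ref{remark:maincorollary}.

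Two points need repair, though neither is fatal. First, reflexivity of a rearrangement invariant space does \emph{not} by itself force the Boyd indices into $(1,\infty)$; the correct justification is that $(X^\sigma)^*$ is again a rearrangement invariant $\UMD$ space (Proposition \ref{proposition:rubioUMD} plus duality of the $\UMD$ property), so that $1<p_{(X^\sigma)^*}\leq q_{(X^\sigma)^*}<\infty$ by \cite[Proposition 7.4.12]{HNVW17}, and then $M$ is bounded on $(X^\sigma)^*$ by the Lorentz--Shimogaki theorem. Second, your dense class should consist of \emph{compactly supported} simple functions, as in the paper's set $Y$: a simple function whose support merely has finite measure need not belong to $L^\sigma(v)$ for a general $v\in A_\sigma$. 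Finally, your concern about uniformity of the constants as $\sigma\to 1$ is harmless under the paper's conventions: $\inc_{X,\mc{T},p,q,s}$ is only required to be a finite nondecreasing function of $[w]_{A_p}$, so finiteness of $[\widetilde{\mc{T}}]_{\ell^\sigma}$ for each fixed $\sigma>1$, together with the monotonicity of $t\mapsto\max\cbraceb{[\widetilde{\mc{T}}]_{\ell^t},[\widetilde{\mc{T}}]_{\ell^{t'}}}$ as $t\to 1$, already suffices.
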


Note that in Corollary \ref{corollary:main} we need that $T(x,y)$ is well-defined on $L^q(v)$ for all $T \in \mc{T}$ and $x,y \in \R^d$. This is indeed the case, since $X \cap L^q(v)$ is dense in $L^q(v)$.

\begin{proof}
  Let $Y$ be the linear span of
  \begin{equation*}
    \cbrace{\ind_K \xi: K\subseteq \R^e \text{ compact},\, \xi \in X \cap L^\infty(\R^e)}.
  \end{equation*}
  Then $Y \subseteq L^q(v)$ for all $v \in A_p$ and $Y$ is dense in $X$ by order continuity. Define
  \begin{equation*}
    \mc{F}:= \cbraceb{\hab{\abs{T(x,y)\xi},\abs{\xi}}: T \in \mc{T},\, x,y\in \R^d, \, \xi \in Y}.
  \end{equation*}
  Note that $X$ has upper Boyd index $q_X <\infty$ by the $\UMD$ property (see \cite[Proposition 7.4.12]{HNVW17} and \cite[Section 2.a]{LT79}). So we can use the extrapolation result for Banach function spaces in \cite[Theorem 2.1]{CGM06} to conclude that for $\sigma \in (1,\infty)$
\begin{equation*}
  \nrms{\has{\sum_{j=1}^n \abs{T_j(x_j,y_j)\xi_j}^{\sigma}}^{1/\sigma}}_X \leq C_{\mc{T},q} \, \nrms{\has{\sum_{j=1}^n \abs{\xi_j}^{\sigma}}^{1/\sigma}}_X
\end{equation*}
for any $T_j \in \mc{T}$, $x_j,y_j \in \R^d$ and $\xi_j \in Y$ for $j=1,\cdots,n$. By the density this extends to $\xi_j \in X$, so $$\cbrace{T(x,y): x,y\in \R^d, T \in \mc{T}}$$ is $\ell^\sigma$-bounded for all $\sigma \in (1,\infty)$. Therefore the corollary follows from Theorem \ref{theorem:main}.
\end{proof}

\begin{remark}~\label{remark:maincorollary}
\begin{itemize}
\item A sufficient condition for the weighted boundedness assumption in Corollary \ref{corollary:main} is that $T(x,y)\xi \leq C \, M\xi$ for all $T\in \mc{T}$, $x,y \in \R^d$ and $\xi \in L^{q}(\R^e)$, which follows directly from \cite[Theorem 9.1.9]{Gr09}.
   \item Corollary \ref{corollary:main} holds more generally for $\UMD$ Banach function spaces $X$ such that the Hardy-Littlewood maximal operator is bounded on both $X$ and $X^*$ (see \cite[Theorem 4.6]{CMP11}). For example the variable Lebesgue spaces $L^{p(\cdot)}$ satisfy this assumption if $p_+,p_- \in (1,\infty)$ and $p(\cdot)$ satisfies a certain continuity condition, see \cite{CFN03,Ne04}.
 \item  The conclusion of Corollary \ref{corollary:main} also holds for $X(v)$ for all $v \in A_{p_X}$ where $p_X$ is the lower Boyd index of $X$ and $X(v)$ is a weighted version of $X$, see \cite[Theorem 2.1]{CGM06}.
\end{itemize}

\end{remark}

\bibliographystyle{plain}

\end{document}